\topskip \setlength{\parindent}{0pt} \setlength{\parskip}{5pt plus
\newcommand*\circled[1]{\tikz[baseline=(char.base)]{
\node[draw,shape=circle,inner sep=1pt] (char) {#1};}}
\def\blue{\textcolor{blue} }
\def\white{\textcolor{white} }
\def\v{\vert}
\def\gf{generating function\xspace}
\def\gfNEW{\frac{1 - 5 x + (1 + x) \sqrt{1-4x}}{1 - 5 x + (1 - x) \sqrt{1-4x}}}
\begin{document}
\newtheorem{theorem}{Theorem}
\newtheorem{defn}[theorem]{Definition}
\newtheorem{lemma}[theorem]{Lemma}
\newtheorem{prop}[theorem]{Proposition}
\newtheorem{cor}[theorem]{Corollary}
\newtheorem{conjecture}[theorem]{Conjecture}

\title[Five subsets of permutations]{Five subsets of permutations enumerated as weak sorting permutations}
\author[D. Callan]{David Callan}
\address{Department of Statistics, University of Wisconsin, Madison, WI 53706}
\email{callan@stat.wisc.edu}
\author[T.~Mansour]{Toufik Mansour}
\address{Department of Mathematics, University of Haifa, 3498838 Haifa, Israel}
\email{tmansour@univ.haifa.ac.il}
\maketitle

\begin{abstract}
We show that the number of permutations of $\{1,2,\dots,n\}$ that avoid any 
one of five specific triples of 4-letter patterns is given by sequence A111279 in OEIS, 
which is known to count weak sorting permutations. By numerical evidence, there are no 
other (non-trivial) triples of 4-letter patterns giving rise to this sequence.  We make 
use of a variety of methods in proving our result, including recurrences, the kernel 
method, direct counting, and bijections.
\end{abstract}

\noindent{\emph{Keywords}}: pattern avoidance, Wilf-equivalence, kernel method, weak sorting permutations

\noindent{\emph{2010 Mathematics Subject Classification}}: 05A15, 05A05

\section{Introduction} \label{intro}

Let $\pi=\pi_1\pi_2\cdots\pi_n\in S_n$ and $\tau\in S_k$ be two permutations.  
We say that $\pi$ {\em contains} $\tau$ if there exists a subsequence $1\leq i_1<i_2<\cdots<i_k\leq n$ such that
$\pi_{i_1}\pi_{i_2}\cdots\pi_{i_k}$ is order-isomorphic to $\tau$; in such a context $\tau$ is usually called a {\em pattern}. We say that $\pi$ {\em avoids} $\tau$, or is {\em $\tau$-avoiding}, if no such subsequence exists. The set of all $\tau$-avoiding permutations in $S_n$ is denoted $S_n(\tau)$.
For an arbitrary finite collection of patterns $T$, we say that $\pi$ {\em avoids} $T$ if $\pi$
avoids every $\tau\in T$; the corresponding subset of $S_n$ is denoted $S_n(T)$. 
Two sets of patterns $T$ and $T'$ are said to be \emph{Wilf-equivalent} if their avoiders have the same 
counting sequence, that is, if $|S_n(T)|=|S_n(T')|$ for all $n \geq0$. 
In the context of pattern avoidance, a {\em symmetry class} refers to an orbit of the dihedral group of order eight generated by the  operations reverse, complement, and inverse acting entrywise on sets of patterns. 
Two pattern sets in the same symmetry class obviously have equinumerous avoiders, that is, are trivially Wilf-equivalent.

The weak sorting permutations are those that avoid 3241, 3421 and 4321 \cite{AAA}, counted by sequence \htmladdnormallink{A111279}{http://oeis.org/A111279} in \cite{oeis}.
We will show that there are precisely five symmetry classes of triples of 4-letter patterns counted as the weak sorting 
permutations. Representatives $\Pi_j, \ 1\le j \le 5$, of these five classes are listed in Theorem \ref{mainthm} below. 
The  weak sorting triple 3241, 3421, 4321 is in the same symmetry class as $\Pi_1$. 
(Our proof for $\Pi_1$ is different from that in \cite{AAA} for the weak sorting triple and is included because similar methods are used for $\Pi_2$ and $\Pi_3$.) 
A computer check of initial terms shows that no other symmetry 
class of triples of 4-letter patterns has this counting sequence.

\begin{theorem}[Main Theorem]\label{mainthm}
Define
\begin{align*}
\Pi_1=\{1234,1243,1342\},&&\Pi_2=\{1243,1324,1342\}, && \Pi_3&=\{1324,1342,1432\}\\
\Pi_4=\{2314,3214,4213\}, && \Pi_5=\{3214,3241,4213\}\, . && 
\end{align*}
Then, for all $j=1,2,3,4,5$,
\begin{equation}\label{maineqn}
\sum_{n\geq0}\#S_n(\Pi_j)x^n=\gfNEW\, .
\end{equation}

\end{theorem}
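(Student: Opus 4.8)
The plan is to prove \eqref{maineqn} by showing separately, for each $j\in\{1,\dots,5\}$, that $F_j(x):=\sum_{n\ge0}\#S_n(\Pi_j)\,x^n$ coincides with $G(x):=\gfNEW$. Since the theorem asserts that $\Pi_1,\dots,\Pi_5$ represent five \emph{distinct} symmetry classes, no reverse/complement/inverse relabelling relates them to one another, so the five avoidance classes genuinely require individual arguments; I would organize these by the shape of the patterns, handling $\Pi_1,\Pi_2,\Pi_3$ (whose members all begin with the letter $1$) by generating‑function/kernel methods and $\Pi_4,\Pi_5$ (whose members begin with a large letter) by a bijection or by direct counting. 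A useful preliminary is a structural description of each class. For instance, a short case check shows that $\pi$ avoids $\Pi_1=\{1234,1243,1342\}$ exactly when there is no quadruple of positions $i<j<k<\ell$ with $\pi_i<\pi_j<\pi_k$ and $\pi_i<\pi_\ell$; equivalently, whenever $\pi_i<\pi_j<\pi_k$ with $i<j<k$, every entry to the right of position $k$ is smaller than $\pi_i$. Analogous descriptions for $\Pi_2$ and $\Pi_3$ single out the entry $n$ together with its left‑to‑right‑maximum context.

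For $\Pi_1,\Pi_2,\Pi_3$ I would introduce a bivariate refinement $F_j(x,u)=\sum_{n,\pi}x^n u^{\mathrm{stat}(\pi)}$, with $\mathrm{stat}$ chosen so that a recursive decomposition of the avoider (say, according to the position and surrounding structure of the entry $n$, or according to the value of $\pi_1$) yields a functional equation of the schematic form
\[
F_j(x,u)=A_j(x,u)+B_j(x,u)\,F_j(x,1)+C_j(x,u)\,F_j(x,u).
\]
The catalytic variable $u$ sits in the kernel $1-C_j(x,u)$, and the kernel method — substituting the unique power‑series‑in‑$x$ root $u=u_0(x)$ of $1-C_j(x,u)=0$ — kills the term $F_j(x,u)$ and solves for $F_j(x,1)$. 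Simplifying the resulting algebraic expression, which will involve $\sqrt{1-4x}$, should collapse to $G(x)$. Doing this for $\Pi_1$ also re‑proves the weak sorting case of \cite{AAA} by a new route and builds machinery then reused, with modifications, for $\Pi_2$ and $\Pi_3$.

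For $\Pi_4$ and $\Pi_5$ I would first look for a length‑preserving bijection — either from $S_n(\Pi_4)$ or $S_n(\Pi_5)$ onto one of $S_n(\Pi_1),S_n(\Pi_2),S_n(\Pi_3)$, or between $S_n(\Pi_4)$ and $S_n(\Pi_5)$ — built from the structure of these avoiders (here the natural pivot is the position of the entry $1$, or the right‑to‑left minima). Failing a clean bijection, the fallback is direct enumeration: write $S_n(\Pi_j)$ as an explicit union of combinatorially transparent families — for instance an initial descending run of prescribed length together with a Catalan‑counted tail — and verify that the resulting sum has generating function $G(x)$, using $\sqrt{1-4x}=1-2xC(x)$ with $C(x)=\sum_{n\ge0}\binom{2n}{n}\frac{x^n}{n+1}$ to match coefficients.

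The crux will be twofold. First, for $\Pi_1,\Pi_2,\Pi_3$, choosing $\mathrm{stat}$ so that the functional equation genuinely closes with a single catalytic variable: a careless choice forces a second catalytic parameter and the one‑variable kernel method no longer applies, so some experimentation (first‑entry value versus number of left‑to‑right maxima, perhaps with an auxiliary marked descent) will be needed, and $\Pi_3=\{1324,1342,1432\}$, whose patterns share only the leading $1$, looks like the most delicate of the three. Second, after the kernel substitution the closed form emerges as an unwieldy expression in $x$ and $\sqrt{1-4x}$, and reducing it to the compact shape $G(x)$ rather than to the bulkier equivalent $\gfOLD$ is where most of the bookkeeping lies. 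For $\Pi_4,\Pi_5$, the delicate point is instead to verify that the proposed map or decomposition respects avoidance of all three patterns of the triple simultaneously, not merely one or two of them.
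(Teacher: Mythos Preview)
Your high-level plan matches the paper's organization: kernel method for $\Pi_1,\Pi_2,\Pi_3$, a bijection for $\Pi_4$, and a direct structural count for $\Pi_5$. But the execution in the paper is more intricate than your schema anticipates, and the discrepancies are exactly at the points you flag as ``crux''.

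For $\Pi_1$ (and then $\Pi_2,\Pi_3$ by the same template), a single catalytic equation of the form $F(x,u)=A+B\,F(x,1)+C\,F(x,u)$ does not close. The paper refines by the first letter, $a_n(i)=\#\{\pi\in S_n(\Pi_1):\pi_1=i\}$, but this recurrence does not stand alone: one must introduce a second family $b_n(i)$ (for $\Pi_1$ it is $a_n(i,n-1)$; for $\Pi_2$ it is $a_n(i,i+1)$; for $\Pi_3$ it is $a_n(i,n)$) and obtain a \emph{coupled pair} of functional equations for $A(x,v)$ and $B(x,v)$ sharing the same kernel $1-\tfrac{x}{v(1-v)}$. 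Eliminating $B$ gives an equation in $A$ whose left side carries a \emph{squared} kernel factor, so the usual ``substitute the kernel root'' step yields nothing; the paper instead differentiates in $v$ before substituting $v=\tfrac{1+\sqrt{1-4x}}{2}$. This differentiation trick is the missing idea in your plan; without it the method stalls. On the upside, once the $\Pi_1$ recurrences are written down, $\Pi_2$ and $\Pi_3$ satisfy \emph{literally the same} recurrences with different definitions of $b_n(i)$, so your worry that $\Pi_3$ would be the hardest turns out to be misplaced.

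For $\Pi_4$ the paper does not look for a bijection to another $\Pi_j$-class; instead it maps $S_n(3214,4213)$ bijectively to Schr\"oder $(n{-}1)$-paths via the bounding staircase of a permutation, and then shows that further avoiding $2314$ corresponds to each component of the Schr\"oder path having at most one peak. This identifies the target class concretely, and the generating function drops out from known Schr\"oder/Catalan counts. For $\Pi_5$ the paper gives a four-condition characterization (split $\pi$ at the height of its last entry into an upper $213$-avoider $A$ and a lower $321$-avoider $B$, with interleaving controlled by ``key'' entries of $A$), leading to an explicit triple sum in generalized Catalan numbers. Your proposal to pivot on the position of $1$ or on right-to-left minima is not the decomposition that works here; the operative pivot is the \emph{value} of the last entry.
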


\section{Proof of main theorem}
To prove our main theorem, we find an explicit formula for the generating function $\sum_{n\geq0}\#S_n(\Pi_j)x^n$, where 
$j=1,2,3,4,5$. 
Furthermore, for the fifth class, $\Pi_5$, we give an explicit formula for the number of members of the set $S_n(\Pi_5)$.

\subsection{Class 1} $\Pi_1=\{1234,1243,1342\}$. Let $A_n=S_n(\Pi_1)$. Define $a_n=\#A_n$ and $a_n(i_1,\ldots,i_s)$ to 
be the number of permutations $\pi = \pi_1\cdots\pi_n\in A_n$ such that $\pi_1\cdots\pi_s=i_1\cdots i_s$. Then we have the 
following recurrence.

\begin{lemma}\label{Case1L1}
Define $b_n(i)=a_n(i,n-1)$. For all $1\leq i\leq n-3$,
\begin{align*}
a_n(i)&=a_{n-1}(i)+\cdots+a_{n-1}(1)+b_n(i),\\
b_n(i)&=b_{n-1}(i)+\cdots+b_{n-1}(1)
\end{align*}
with $a_n(n-2)=a_n(n-1)=a_n(n)=a_{n-1}$, $b_n(n-1)=0$ and $b_n(n-2)=b_n(n)=a_{n-2}$.
\end{lemma}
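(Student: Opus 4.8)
The plan is to describe a permutation $\pi\in A_n$ by its first entry $i=\pi_1$. The key observation is that each pattern in $\Pi_1$ starts with its smallest letter, so any occurrence of a $\Pi_1$-pattern that involves $\pi_1$ must use $\pi_1$ in that role; hence such an occurrence consists of $\pi_1$ together with three later entries, all larger than $i$, whose relative order is one of $123$, $132$, $231$ (the orders of the last three letters of $1234$, $1243$, $1342$). Consequently $\pi\in A_n$ if and only if the standardization of $\pi_2\cdots\pi_n$ lies in $A_{n-1}$ \emph{and} the subword of $\pi_2\cdots\pi_n$ formed by the entries exceeding $i$ — call these the \emph{large} entries — avoids $\{123,132,231\}$. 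A short lemma (take the first ascent) records that a word avoids $\{123,132,231\}$ exactly when all of its entries but the last are strictly decreasing; I will also use the ``nesting'' corollary that such a word, restricted to any top segment of its values, is again of this shape.

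The boundary cases then follow at once. If $i\in\{n-2,n-1,n\}$ there are at most two large entries, so the extra condition above is vacuous and $a_n(i)=a_{n-1}$. For $b_n(i)=a_n(i,n-1)$ the same count of large entries gives $b_n(n-1)=0$ (an impossible prefix) and shows that when $i\in\{n-2,n\}$ no $\Pi_1$-occurrence can use either of the first two entries, so $b_n(n-2)$ and $b_n(n)$ both equal $\#A_{n-2}=a_{n-2}$ via deletion of the first two entries.

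For the recurrence on $a_n(i)$, $1\le i\le n-3$, split the permutations counted by $a_n(i)$ according to $\pi_2$. The large entries form a word that is decreasing-except-last of length $n-i\ge3$, so its first entry is $n$ or $n-1$; hence either $\pi_2<i$, or $\pi_2=n-1$, or $\pi_2=n$. Those with $\pi_2=n-1$ are by definition the $b_n(i)$ permutations. Deleting the value $n$ from those with $\pi_2=n$ is a bijection onto $\{\sigma\in A_{n-1}:\sigma_1=i\}$ — surjectivity holds because reinserting the maximum immediately after $\sigma_1$ keeps the decreasing-except-last word of large entries decreasing-except-last — contributing $a_{n-1}(i)$. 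Deleting $\pi_1$ and standardizing those with $\pi_2<i$ is a bijection onto $\{\sigma\in A_{n-1}:\sigma_1\le i-1\}$; here surjectivity is exactly where nesting is needed, since for such $\sigma$ the entries exceeding the value being prepended form a top segment of the large entries of $\sigma$, which are decreasing-except-last because $\sigma\in A_{n-1}$. Summing the three cases yields $a_n(i)=b_n(i)+a_{n-1}(i)+\sum_{k=1}^{i-1}a_{n-1}(k)=b_n(i)+\sum_{k=1}^{i}a_{n-1}(k)$.

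The recurrence $b_n(i)=\sum_{k=1}^{i}b_{n-1}(k)$ is obtained by running the same argument one level deeper. A permutation counted by $b_n(i)$ has $\pi_1=i$, $\pi_2=n-1$, and, by the first paragraph, its large entries occurring in the order $(n-1)(n-2)\cdots(i+1)\,n$; in particular the second large entry is $n-2$, so $\pi_3$ is either $<i$ or equal to $n-2$. Deleting $\pi_2$ when $\pi_3=n-2$ reduces to $b_{n-1}(i)$, and the case $\pi_3<i$ reduces to $\sum_{k=1}^{i-1}b_{n-1}(k)$. The main obstacle throughout is not the strategy but the bookkeeping in the bijections: one must check that each deletion map lands \emph{onto} the asserted set — i.e.\ that membership in $A_{n-1}$ automatically supplies (or, when $\sigma_1=i$, automatically withholds) the extra decreasing-except-last condition — and that the inverse insertion never creates a forbidden pattern. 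I expect these codomain verifications, and in particular the $\pi_3<i$ case of the $b$-recurrence, to be the fiddly part.
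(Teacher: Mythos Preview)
Your proposal is correct and follows essentially the same route as the paper: both split $a_n(i)$ according to $\pi_2$ and $b_n(i)$ according to $\pi_3$, obtain the same three cases in each ($\pi_2\in\{<i,\ n,\ n-1\}$ and $\pi_3\in\{<i,\ n-2\}$, the remaining values being impossible), and use the same deletions/insertions to reduce to size $n-1$. Your ``decreasing-except-last'' characterization of the large entries is a clean packaging of what the paper checks case by case with ad-hoc pattern arguments; in particular, for the $\pi_3=j<i$ case of $b_n$ the paper's bijection is exactly the delete-$\pi_1$ map composed with a swap of the first two remaining letters (sending $i,(n{-}1),j,\pi'$ to $j,(n{-}2),\pi''$), which is the ``fiddly'' verification you anticipated.
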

\begin{proof}
By the definitions, $a_n(n)=a_n(n-1)=a_n(n-2)=a_{n-1}$, $b_n(n-1)=0$ and $b_n(n-2)=b_n(n)=a_{n-2}$. If $1\leq i\leq n-2$, then
\begin{align*}
a_n(i)&=\sum_{j=1}^{i-1}a_n(i,j)+\sum_{j=i+1}^na_n(i,j)=\sum_{j=1}^{i-1}a_{n-1}(i)+a_n(i,n)+b_n(i)\\
&=\sum_{j=1}^{i}a_{n-1}(i)+b_n(i).
\end{align*}
Also,
\begin{align*}
b_n(i)&=\sum_{j=1}^{i-1}a_n(i,n-1,j)+\sum_{j=i+1}^{n-2}a_n(i,n-1,j)+a_n(i,n-1,n).
\end{align*}
By the definitions, $a_n(i,n-1,n)=0$ (the permutations in the question have subsequence $i,n-1,n,n-2$ which is order isomorphic to $1342$. Let $\pi=i(n-1)j\pi'\in A_n$ with $i+1\leq j\leq n-3$, since $\pi$ avoids $1234$ and $1342$, we see that $a_n(i,n-1,j)=0$. Clearly, $a_n(i,n-1,n-2)=a_{n-1}(i,n-2)=b_{n-1}(i)$. Thus,
\begin{align*}
b_n(i)&=\sum_{j=1}^{i-1}a_n(i,n-1,j)+b_{n-1}(i).
\end{align*}
Note that $\pi=i(n-1)j\pi'\in A_n$ with $1\leq j\leq i$ if and only if $j(n-2)\pi''\in A_{n-1}$, where $\pi''$ is a word obtained from $\pi'$ by decreasing each letter greater than $i$ by $1$. Hence, $a_n(i,n-1,j)=a_{n-1}(j,n-2)$, for all $j=1,2,\ldots,i-1$. In other words,
$b_n(i)=\sum_{j=1}^{i}b_{n-1}(j)$, as required.
\end{proof}

Define $A_n(v)=\sum_{i=1}^na_n(i)v^{i-1}$ and $B_n(v)=\sum_{i=1}^nb_n(i)v^{i-1}$. Then by multiplying the recurrence relations in Lemma \ref{Case1L1} by $v^{i-1}$, we obtain
\begin{align*}
\sum_{i=1}^{n-3}a_n(i)v^{i-1}&=\sum_{i=1}^{n-3}\sum_{j=1}^ia_{n-1}(j)v^{i-1}+\sum_{i=1}^{n-3}b_n(i)v^i,\\
\sum_{i=1}^{n-3}b_n(i)v^{i-1}&=\sum_{i=1}^{n-3}\sum_{j=1}^ib_{n-1}(j)v^{i-1},
\end{align*}
which, by the initial conditions, gives that for $n\geq3$,
\begin{align*}
A_n(v)&=\frac{1}{1-v}(A_{n-1}(v)-v^{n}A_{n-1}(1))+B_{n}(v)-v^{n-1}A_{n-2}(1),\\
B_n(v)&=\frac{1}{1-v}(B_{n-1}(v)-v^{n-3}B_{n-1}(1))+v^{n-3}A_{n-3}(1)+v^{n-1}A_{n-2}(1)+v^{n-3}A_{n-2}(1).
\end{align*}
By direct calculations, we have $A_0(v)=A_1(v)=1$, $A_2(v)=1+v$, $B_0(v)=B_1(v)=0$ and $B_2(v)=v$.

Let $A(x,v)=\sum_{n\geq0}A_n(v)x^n$ and $B(x,v)=\sum_{n\geq0}B_n(v)x^n$ be the generating functions for the sequences $A_n(v)$ and $B_n(v)$, respectively. By multiplying by $x^n$ and summing over $n\geq3$, we obtain
\begin{align}
&A(x,v)-1-x-(1+v)x^2\notag\\
&\quad=\frac{x}{1-v}(A(x,v)-1-x-vA(xv,1)+v+xv^2)+B(x,v)-vx^2A(xv,1),\label{eqcase1a1}\\
&B(x,v)-vx^2\notag\\
&\quad=\frac{x}{1-v}(B(x,v)-v^{-3}B(xv,1))+x^3A(xv,1)+(vx^2+x^2v^{-1})(A(xv,1)-1).\label{eqcase1a2}
\end{align}
Hence, \eqref{eqcase1a1} and \eqref{eqcase1a2} can be written as
\begin{align*}
\left(1-\frac{x}{v(1-v)}\right)A(x/v,v)&=1-\frac{x}{1-v}A(x,1)+B(x/v,v)-\frac{x^2}{v}A(x,1),\\
\left(1-\frac{x}{v(1-v)}\right)B(x/v,v)&=\frac{-x}{v^4(1-v)}B(x,1)+\left(\frac{x^3}{v^3}+\frac{x^2}{v}+\frac{x^2}{v^3}\right)A(x,1)-\frac{x^2}{v^3}.
\end{align*}
By substituting $v=\frac{1+\sqrt{1-4x}}{2}$ (the zero of the kernel $1-\frac{x}{v(1-v)}$, see \cite{HM}) into the second equation, we obtain
\begin{align}
B(x,1)=\frac{x(\sqrt{1-4x}-1)}{2}+\frac{2x^2+x-x\sqrt{1-4x}}{2}A(x,1).\label{eqcase1a3}
\end{align}
By multiplying the first equation by $1-\frac{x}{v(1-v)}$, and using the second equation, we obtain
\begin{align*}
\left(1-\frac{x}{v(1-v)}\right)^2A(x/v,v)&=1-\frac{x}{v(1-v)}-\frac{x^2}{v^3}-\left(\frac{x^2}{v}+\frac{x}{1-v}\right)\left(1-\frac{x}{v(1-v)}\right)A(x,1)\\
&-\frac{x}{v^4(1-v)}B(x,1)+\left(\frac{x^3}{v^3}+\frac{x^2}{v}+\frac{x^2}{v^3}\right)A(x,1).
\end{align*}
After differentiating the above equation respect to $v$, substituting $v=\frac{1+\sqrt{1-4x}}{2}$ together with using \eqref{eqcase1a3}, and several simple algebraic operations, we obtain an explicit formula for $A(x,1)$ as
$$A(x,1)=\gfNEW,$$
which completes the proof of this case.

\subsection{Class 2}
$\Pi_2=\{1243,1324,1342\}$. Let $A_n=S_n(\Pi_2)$. Define $a_n=\#A_n$ and $a_n(i_1,\ldots,i_s)$ to be the number of permutations $\pi_1\cdots\pi_n\in A_n$ such that $\pi_1\cdots\pi_s=i_1\cdots i_s$. 

\begin{lemma}\label{Case2L1}
Define $b_n(i)=a_n(i,i+1)$. For all $1\leq i\leq n-3$,
\begin{align*}
a_n(i)&=a_{n-1}(i)+\cdots+a_{n-1}(1)+b_n(i),\\
b_n(i)&=b_{n-1}(i)+\cdots+b_{n-1}(1)
\end{align*}
with $a_n(n-2)=a_n(n-1)=a_n(n)=a_{n-1}$, $b_n(n)=0$ and $b_n(n-2)=b_n(n-1)=a_{n-2}$.
\end{lemma}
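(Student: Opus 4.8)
The plan is to mirror the structure of the proof of Lemma \ref{Case1L1} almost verbatim, since $\Pi_2$ differs from $\Pi_1$ only in replacing $1234$ by $1324$, and the combinatorial role of the ``second-smallest-among-first-two'' statistic is here played by $b_n(i)=a_n(i,i+1)$ rather than $a_n(i,n-1)$. First I would establish the boundary conditions: for a permutation $\pi\in A_n$ with $\pi_1=i$ and $i\in\{n-2,n-1,n\}$, removing $\pi_1$ and relabelling gives a bijection with $A_{n-1}$, so $a_n(n-2)=a_n(n-1)=a_n(n)=a_{n-1}$; the condition $b_n(n)=0$ holds because $\pi_1\pi_2=n\,(n+1)$ is impossible, wait --- rather because $b_n(i)=a_n(i,i+1)$ forces $i+1\le n$, and the pair $(n-2,n-1),(n-1,n)$ cases again reduce to $A_{n-2}$ after deleting the first two entries, giving $b_n(n-2)=b_n(n-1)=a_{n-2}$.

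Next I would derive the recurrence for $a_n(i)$ with $1\le i\le n-3$. Split according to the value $j=\pi_2$. If $j<i$ then, since $\pi_1=i>j$, deleting $\pi_1$ and relabelling the remaining letters larger than\ldots actually the cleaner statement is: for $j<i$ the map $\pi\mapsto$ (delete $\pi_1$) lands in $A_{n-1}$ with first entry $j$, and conversely any such word extends uniquely (the new first entry $i$ cannot start a forbidden pattern because $1324,1342,1243$ all require the first of the four chosen letters to be the smallest, and here $\pi_2=j<i$). This yields a contribution $\sum_{j=1}^{i-1}a_{n-1}(j)$. If $j=i+1$ we get exactly $b_n(i)$. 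If $j>i+1$, I claim $a_n(i,j)=a_{n-1}(j-1)$ again via deletion of $\pi_1$, but one must check that reinserting $i$ in front creates no $1243$, $1324$, or $1342$: any such pattern using the new letter $i$ as its ``1'' would, together with $\pi_2=j$ playing one of the later roles, already be witnessed inside $\pi_2\cdots\pi_n$ by a slightly larger letter, so no new obstruction arises --- this is the step that needs the precise list of patterns and is the analogue of the ``$a_n(i,n-1,j)=0$'' type arguments in Class 1. Summing, $a_n(i)=\sum_{j=1}^{i}a_{n-1}(j)+b_n(i)$, i.e.\ the first displayed recurrence.

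For $b_n(i)$ I would condition on $\pi_3=j$ in a permutation with $\pi_1\pi_2=i\,(i+1)$, $1\le i\le n-3$. The subsequence $i,(i+1)$ already realizes the ``$12$'' of every pattern in $\Pi_2$, so the constraints on $\pi_3\cdots\pi_n$ become: avoid $1324$-, $1342$-, $1243$-patterns outright, and also (using $\pi_1\pi_2$) forbid any later pair that would complete $i,(i+1),\cdot,\cdot$ to $1243$, $1324$ or $1342$ --- which forces the letters of $\pi$ exceeding $i+1$, read left to right after position $2$, to appear in \emph{decreasing} order among themselves only up to the point dictated by these patterns. Working this out, $j>i+1$ is impossible for essentially the reason $a_n(i,n-1,n)=0$ was in Class 1, $j=i+1$ is impossible, and the surviving cases $j\le i$ biject (delete $\pi_1$, relabel) with permutations counted by $b_{n-1}(j)$, giving $b_n(i)=\sum_{j=1}^{i}b_{n-1}(j)=b_{n-1}(i)+\cdots+b_{n-1}(1)$.

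The main obstacle will be the bookkeeping in the two ``reinsertion is harmless'' claims: one must verify, pattern by pattern over $\{1243,1324,1342\}$, that prepending the small letter $i$ (respectively the block $i,(i+1)$) to an avoider of the appropriate set again avoids $\Pi_2$, and that the reverse deletion maps are well defined. Because all three forbidden patterns begin with their minimum, these checks are routine but must be done explicitly; the fact that $1324$ replaces $1234$ (compared to Class 1) changes only which intermediate configurations are ruled out in the $b_n(i)$ analysis, not the final shape of the recurrence, which is why Lemma \ref{Case2L1} has the same form as Lemma \ref{Case1L1} with the single modification $b_n(i)=a_n(i,i+1)$ and the correspondingly shifted boundary value $b_n(n)=0$.
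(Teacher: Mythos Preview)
Your overall strategy---split $a_n(i)$ and $b_n(i)$ according to the next letter, exactly as in Lemma~\ref{Case1L1}---is the paper's strategy, but two of your case analyses are wrong, and with them your sums do not actually produce the stated recurrences.

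\textbf{The $a_n(i)$ recurrence.} Your treatment of $j>i+1$ is incorrect. You claim $a_n(i,j)=a_{n-1}(j-1)$ for all such $j$ via ``delete $\pi_1$''. But reinsertion of $i$ is \emph{not} harmless when $i+2\le j\le n-1$: the letter $i+1$ and some letter $m>j$ both occur after position~$2$, and then $i,j,i+1,m$ is a $1324$ or $i,j,m,i+1$ is a $1342$, so in fact $a_n(i,j)=0$ for every $i+2\le j\le n-1$. (Concretely, $a_4(1,3)=0$ while your formula gives $a_3(2)=2$.) The one surviving large case is $j=n$, where deleting $n$ (not $\pi_1$) gives $a_n(i,n)=a_{n-1}(i)$. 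With these corrections the sum is $\sum_{j=1}^{i-1}a_{n-1}(j)+b_n(i)+a_{n-1}(i)$, which is the desired recurrence; your decomposition as written would instead yield $a_{n-1}-a_{n-1}(i)+b_n(i)$.

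\textbf{The $b_n(i)$ recurrence.} You assert that $\pi_3=j>i+1$ is impossible. Only $j>i+2$ is impossible (then $i,i+1,j,i+2$ is a $1243$); the case $j=i+2$ survives and contributes $a_n(i,i+1,i+2)=a_{n-1}(i,i+1)=b_{n-1}(i)$, which is precisely the ``$j=i$'' term in $\sum_{j=1}^{i}b_{n-1}(j)$ that your argument is missing. For $1\le j\le i-1$ the map is also not simply ``delete $\pi_1$ and relabel'': that produces a permutation beginning $i,j$, counted by $a_{n-1}(i,j)$, not by $b_{n-1}(j)=a_{n-1}(j,j+1)$. The paper instead argues that in $i\,(i+1)\,j\,\pi'$ the letters $i+2,\dots,n$ must appear in increasing order (else $1243$), that every $j'$ with $j<j'<i$ must lie left of $i+2$ (else $1324$ or $1342$), and that $j,j+1,\dots,i-1$ occur in increasing order (else $1324$); a tailored relabelling then sends $\pi$ to a permutation in $A_{n-1}$ beginning $j\,(j+1)$, giving $a_n(i,i+1,j)=b_{n-1}(j)$.
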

\begin{proof}
By the definitions, $a_n(n)=a_n(n-1)=a_n(n-2)=a_{n-1}$, $b_n(n)=0$ and $b_n(n-2)=b_n(n-1)=a_{n-2}$. If $1\leq i\leq n-2$, then
\begin{align*}
a_n(i)&=\sum_{j=1}^{i-1}a_n(i,j)+\sum_{j=i+1}^na_n(i,j)=\sum_{j=1}^{i-1}a_{n-1}(i)+a_n(i,n)+b_n(i)\\
&=\sum_{j=1}^{i}a_{n-1}(i)+b_n(i).
\end{align*}
Also,
\begin{align*}
b_n(i)&=\sum_{j=1}^{i-1}a_n(i,i+1,j)+\sum_{j=i+2}^{n}a_n(i,i+1,j).
\end{align*}
By the definitions $a_n(i,i+1,j)=0$ with $j>i+2$ (the permutations in the question have subsequence $i,i+1,j,i+2$ which is order isomorphic to $1243$) and  $a_n(i,i+1,i+2)=a_{n-1}(i,i+1)=b_{n-1}(i)$. Thus
\begin{align*}
b_n(i)&=b_{n-1}(i)+\sum_{j=1}^{i-1}a_n(i,i+1,j).
\end{align*}
Let $\pi=i(i+1)j\pi'\in A_n$ with $1\leq j\leq i-1$. Then the letters $i,i+1,i+2,i+3,\ldots,n$ creates an increasing subsequence in $\pi$. If $j'$ with $j<j'<i$ appears on the right side of position of $i+2$ in $\pi$, then $\pi$ contains either $j(i+2)j'(i+3)$ or $j(i+2)(i+3)j'$ which is order isomorphic to $1324$ or $1342$, respectively. Thus $j'$ appears on the left side of the position of $i+2$ in $\pi$. Since $\pi$ avoids $1324$ then $\pi$ contains the subsequence $j,j+1,\ldots,i-1$. Thus $pi\in A_n$ if and only if $j (j+1) \pi''\in A_{n-1}$, where $\pi''$ is a word obtained from $\pi'$ by decreasing each letter greater than $i$ by $1$ and increasing the letters $j+1,j+2,\ldots,i-1$ by $1$. Hence, $a_n(i,i+1,j)=a_{n-1}(j,j+1)$, for all $j=1,2,\ldots,i-1$. In other words,
$b_n(i)=\sum_{j=1}^{i}b_{n-1}(j)$, as required.
\end{proof}

By using the techniques that have been used in the proof of Class 1 and the similarity of Lemma \ref{Case1L1} and Lemma \ref{Case2L1}, one can solve the recurrence relation in Lemma \ref{Case2L1}, and obtain that the generating function $A(x)=\sum_{n\geq0}a_nx^n$ is given by 
$$\gfNEW,$$
as required.

\subsection{Class 3}
$\Pi_3=\{1324,1342,1432\}$. Let $A_n=S_n(\Pi_3)$. Define $a_n=\#A_n$ and $a_n(i_1,\ldots,i_s)$ to be the number of permutations $\pi_1\cdots\pi_n\in A_n$ such that $\pi_1\cdots\pi_s=i_1\cdots i_s$. By using similar arguments as in the proof of Lemmas \ref{Case1L1} and \ref{Case2L1}, one can state the following recurrence.

\begin{lemma}\label{Case3L1}
Define $b_n(i)=a_n(i,n)$. For all $1\leq i\leq n-3$,
\begin{align*}
a_n(i)&=a_{n-1}(i)+\cdots+a_{n-1}(1)+b_n(i),\\
b_n(i)&=b_{n-1}(i)+\cdots+b_{n-1}(1)
\end{align*}
with $a_n(n-2)=a_n(n-1)=a_n(n)=a_{n-1}$, $b_n(n)=0$ and $b_n(n-2)=b_n(n-1)=a_{n-2}$.
\end{lemma}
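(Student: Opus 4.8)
The plan is to prove Lemma~\ref{Case3L1} by imitating, almost line for line, the proofs of Lemmas~\ref{Case1L1} and~\ref{Case2L1}, adjusting the pattern bookkeeping to $\Pi_3=\{1324,1342,1432\}$ and to the choice $b_n(i)=a_n(i,n)$. The reason this choice of $b_n$ is the right one is that $1432$ is the only pattern in our triples whose largest letter is internal (it sits in position~$2$); consequently $a_n(i,n)$ is no longer simply $a_{n-1}(i)$, as it was in Classes 1 and 2, and has to be tracked on its own.

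The boundary conditions are immediate. Every pattern in $\Pi_3$ begins with its smallest letter, so a first letter $\pi_1\in\{n-2,n-1,n\}$ can take part in no occurrence, and deleting it gives $a_n(n)=a_n(n-1)=a_n(n-2)=a_{n-1}$. For $n$ in position~$2$ to be the ``$4$'' of an occurrence of $1432$ one needs two values of the interval $(\pi_1,n)$ appearing to the right in decreasing order; if $\pi_1\in\{n-1,n-2\}$ there is at most one such value, so $\pi_2=n$ is inert as well, and deleting the prefix $\pi_1\pi_2$ gives $b_n(n)=0$ and $b_n(n-1)=b_n(n-2)=a_{n-2}$.

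For the recurrence for $a_n(i)$ with $1\le i\le n-3$, I would split $a_n(i)=\sum_{j\ne i}a_n(i,j)$. When $j<i$ the letter $\pi_1=i$ is inert (it could only be the ``$1$'', but $\pi_2=j<i$), so deleting it is a bijection onto the members of $A_{n-1}$ starting with $j$, contributing $\sum_{j=1}^{i-1}a_{n-1}(j)$. When $i<j<n$ the term vanishes: any value strictly between $i$ and $j$, together with a value exceeding $j$, forms $1324$ or $1342$ according to their relative position, so only $j=i+1$ is possible, and there $a_n(i,i+1)=a_{n-1}(i)$ because deleting $\pi_2=i+1$ is a bijection onto the members of $A_{n-1}$ starting with $i$ --- a letter $i+1$ placed right after $i$ cannot be an interior letter of any pattern of $\Pi_3$, and if it were the ``$1$'' of an occurrence then $i$ together with the same three larger letters would already form a forbidden pattern one size down. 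Finally $a_n(i,n)=b_n(i)$ by definition, and adding the four contributions gives $a_n(i)=\sum_{j=1}^{i}a_{n-1}(j)+b_n(i)$. For $b_n(i)$ I would start from $\pi=i\,n\,j\,\pi'$; here both $\pi_1=i$ and $\pi_2=n$ are inert, since any two letters exceeding $i$ to the right of position~$2$ must increase, else $i\,n\,\cdot\,\cdot$ is an occurrence of $1432$. As before, for $j>i$ only $j=i+1$ survives, and $a_n(i,n,i+1)=b_{n-1}(i)$ by deleting $n$ and then cyclically relabelling $\{i+1,\dots,n-1\}$ so that $i+1$ becomes the new maximum $n-1$; since these letters occur as an increasing run the relabelling is order-preserving where it matters, hence preserves $\Pi_3$-avoidance. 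For $j<i$ the claim is $a_n(i,n,j)=b_{n-1}(j)$, realised by the map $i\,n\,j\,\pi'\mapsto j\,(n-1)\,\pi''$, where $\pi''$ is $\pi'$ with every letter exceeding $i$ decreased by~$1$. Summing over $j$ yields $b_n(i)=b_{n-1}(i)+\sum_{j=1}^{i-1}b_{n-1}(j)$.

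The main obstacle is verifying that the last map is a bijection onto $\{\sigma\in A_{n-1}:\sigma_1\sigma_2=j\,(n-1)\}$, and the crux is the following structural fact: in any $\pi=i\,n\,j\,\pi'\in A_n$ with $j<i$, the letters of $\pi'$ exceeding $j$ appear in increasing order. To prove it, suppose instead that some letter $b>j$ of $\pi'$ is preceded in $\pi'$ by a larger letter $a>b$. First $b<i$, for otherwise $i\,n\,a\,b\sim1432$. Next, since $i\le n-3$ at least two letters of $\pi'$ exceed $i$; if one of them, $m$, lies to the right of $b$ then $j\,a\,b\,m\sim1324$, while if instead both of the two smallest of them, $m_1<m_2$, lie to the left of $b$ then (being in increasing order) $j\,m_1\,m_2\,b\sim1342$. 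Either way we contradict $\pi\in A_n$, so the fact holds. Granting it, both the map above and the $j=i+1$ map are routinely checked to be well defined and invertible: the only occurrences they could create pass through an inserted or promoted global maximum, so must be of type $1432$, and these are excluded by the increasing-run structure just established. This is the precise analogue of the most technical passage in the proof of Lemma~\ref{Case2L1}, with the internal maximum of $1432$ adding one further layer to the case analysis.
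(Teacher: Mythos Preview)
Your proof is correct and is precisely what the paper intends: the paper does not supply a proof of Lemma~\ref{Case3L1} at all, merely asserting that ``similar arguments as in the proof of Lemmas~\ref{Case1L1} and~\ref{Case2L1}'' work, and your argument carries out exactly that analogy. The one place that requires genuine care beyond Lemmas~\ref{Case1L1} and~\ref{Case2L1} is the structural fact that in $\pi=i\,n\,j\,\pi'\in A_n$ with $j<i$ the letters of $\pi'$ exceeding $j$ are increasing, and your case analysis there (using $1432$ to force $b<i$, then $1324$ or $1342$ depending on the location of the large letters relative to $b$) is sound; note that when a letter $m>i$ lies to the right of $b$, the needed inequality $a<m$ follows either from $a<i<m$ or, when $a>i$, from the already-established fact that letters $>i$ in $\pi'$ increase.
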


By comparing Lemma \ref{Case2L1} and Lemma \ref{Case3L1}, we obtain that $\#S_n(\Pi_2)=\#S_n(\Pi_3)$, which implies that the generating function $A(x)=\sum_{n\geq0}a_nx^n$ is given by
$$\gfNEW,$$
as required.

\subsection{Class 4}
$\Pi_4=\{2314,3214,4213\}$ We first give a bijection from permutations avoiding $\{3214,4213\}$ to
one-size-smaller Schr\"{o}der paths.

Recall that a \emph{Schr\"{o}der path} is a lattice path of North steps $N=(0,1)$, diagonal steps $D=(1,1)$ and East steps $E=(1,0)$
that starts at the origin, never drops below the diagonal $y=x$, and terminates on the diagonal.
Its \emph{size} is $\#\,N$ steps + $\#\,D$ steps, and a
Schr\"{o}der $n$-path is one of size $n$. Thus a Schr\"{o}der $n$-path ends at $(n,n)$.
The vertices on $y=x$ split a nonempty Schr\"{o}der path into its \emph{components}, and a Schr\"{o}der path whose
only vertices on $y=x$ are its endpoints (hence, is a one component path) is \emph{indecomposable}.
Thus all components of a Schr\"{o}der path are indecomposable.
The number of Schr\"{o}der $n$-paths is the large  Schr\"{o}der number $r_{n}$,
\htmladdnormallink{A006318}{http://oeis.org/A006318}.
A \emph{peak} is a pair of consecutive steps $NE$ (consider the path rotated $45^{\circ}$).

Every permutation on $[n]$ has a bounding up-down staircase (Figure 1) determined by its left to right (LR) maxima and right to left (RL) maxima.
\begin{center}
\begin{pspicture}(-5,-1)(5,6)
\psset{unit=.6cm}
\psline(-5,0)(5,0)\psline(-5,1)(5,1)\psline(-5,2)(5,2)\psline(-5,3)(5,3)\psline(-5,4)(5,4)\psline(-5,5)(5,5)
\psline(-5,6)(5,6)\psline(-5,7)(5,7)\psline(-5,8)(5,8)\psline(-5,9)(5,9)\psline(-5,10)(5,10)
\psline(-5,0)(-5,10)\psline(-4,0)(-4,10)\psline(-3,0)(-3,10)\psline(-2,0)(-2,10)
\psline(-1,0)(-1,10)\psline(0,0)(0,10)\psline(1,0)(1,10)\psline(2,0)(2,10)\psline(3,0)(3,10)
\psline(4,0)(4,10)\psline(5,0)(5,10)
\rput(-4.5,4.5){\textrm{{\footnotesize $5$}}}
\rput(-3.5,0.5){\textrm{{\footnotesize $1$}}}\rput(-2.5,3.5){\textrm{{\footnotesize $4$}}}
\rput(-1.5,8.5){\textrm{{\footnotesize $9$}}}\rput(-0.5,5.5){\textrm{{\footnotesize $6$}}}
\rput(0.5,7.5){\textrm{{\footnotesize $8$}}}\rput(1.5,9.5){\textrm{{\footnotesize $10$}}}
\rput(2.5,1.5){\textrm{{\footnotesize $2$}}}\rput(3.5,6.5){\textrm{{\footnotesize $7$}}}
\rput(4.5,2.5){\textrm{{\footnotesize $3$}}}
\psline[linewidth=2pt](-5,0)(-5,5)(-2,5)(-2,9)(1,9)(1,10)(2,10)(2,7)(4,7)(4,3)(5,3)(5,0)
\rput(0,-1){Figure 1. Bounding staircase of a permutation.}
\end{pspicture}
\end{center}
\emph{Bounding staircases} of size $n$ are lattice paths from the origin consisting of $n$ steps each North $N=(0,1)$, East $E=(1,0)$, and South $S=(-1,0)$, that are characterized by the following properties:
\vspace*{-2mm}
\begin{enumerate}
\item all $N$ steps precede all $S$ steps, 
\item East runs (maximal sequence of contiguous $E$ steps) are at different heights,
\item measuring from the top, the $i$-th pair of matching $N/S$ steps are at least $i$ units apart (to make room for the permutation entries above them), and the first pair are just 1 unit apart (they bracket the entry $n$)
\end{enumerate}

\begin{prop}\label{bij1} 
There is a bijection from bounding staircases to one-size-smaller Schr\"{o}der paths.  
\end{prop}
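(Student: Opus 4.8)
The plan is to reduce a bounding staircase to a pair of monotone integer sequences, rewrite properties (1)--(3) as inequalities on that data, and read off a Schr\"{o}der path by a direct scanning rule whose validity is forced by those inequalities. By property (1) a bounding staircase $\sigma$ of size $n$ splits at its apex into a \emph{left profile} $P_L$ (the initial $N$-and-$E$ steps, from $(0,0)$ to the apex $(a,n)$) and a \emph{right profile} $P_R$ (the terminal $S$-and-$E$ steps, from $(a,n)$ to $(n,0)$). I encode $P_L$ by the weakly increasing word $\ell_1\le\cdots\le\ell_n$ of the $x$-coordinates of its $N$ steps and $P_R$, scanned downward from the apex, by the weakly increasing word $s_1\le\cdots\le s_n$ of the $x$-coordinates of its $S$ steps. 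Then property (3) becomes $s_k-\ell_{n+1-k}\ge k$ for all $k$ (which at $k=n$ forces $\ell_1=0$, $s_n=n$, and at $k=1$ forces $a=\ell_n+1$), and property (2) becomes the statement that for each intermediate height $h$ the $P_L$-run there (length $\ell_{h+1}-\ell_h$) and the $P_R$-run there (length $s_{n-h+1}-s_{n-h}$) are not both nonempty. The key bookkeeping identity is that the total length of all horizontal runs other than the forced unit run bracketing the entry $n$ at the apex equals $\ell_n+(n-s_1)=n-1$.

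I would then build the bijection with Schr\"{o}der $(n-1)$-paths, drawn with steps $N=(0,1)$, $E=(1,0)$, $D=(1,1)$, by scanning $\sigma$ downward and processing in turn the $n-1$ height-levels below the apex. At level $h$ let $r_h$ be the length of the (by property (2), unique) horizontal run there and record whether that run belongs to $P_L$ or to $P_R$; the ``slack'' $s_{n-h+1}-\ell_h-(n-h+1)\ge 0$ will be the height of the Schr\"{o}der path above its diagonal, and it changes by $r_h-1$ when one passes from level $h+1$ to level $h$. The emission rule puts out, at each level, a block of Schr\"{o}der steps realizing that change, with the profile label deciding whether the block is ``$N$-heavy'' or ``$E$-heavy'' and hence dictating where the diagonal steps $D$ fall. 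By the bookkeeping identity the concatenated word has the correct step multiset to end at $(n-1,n-1)$; by the inequalities $s_k-\ell_{n+1-k}\ge k$ its height above the diagonal never goes negative, so it is a genuine Schr\"{o}der $(n-1)$-path. The inverse reads a Schr\"{o}der $(n-1)$-path back level by level: its step-blocks give the slack increments and the $P_L/P_R$ labels, hence the words $\ell_\bullet$ and $s_\bullet$, hence $\sigma$; that the two maps are mutually inverse is then a routine index chase.

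The step I expect to be the real obstacle is pinning down the emission rule exactly. The subtlety is threefold: the regimes $r_h=0$, $r_h=1$, $r_h\ge 2$ are qualitatively different; a run of length $r\ge 2$ cannot simply be turned into a run of $N$'s without breaking the step count; and the rule must genuinely depend on whether a run lies in $P_L$ or in $P_R$ (for instance the bounding staircases of $213$ and $312$ have identical slack sequences but must be sent to different Schr\"{o}der paths). So the dictionary ``run of length $r$ in profile $X$ $\mapsto$ which block of Schr\"{o}der steps'' must be chosen so that it is simultaneously height-compatible, step-count-compatible, and invertible; once that is done, the two forced features of $\sigma$ --- the unit $E$ at the apex and the straight $S$-descent on the far right --- must be tracked so the size comes out exactly $n-1$, with no off-by-one slipping in from the reindexing between ``from the bottom'' and ``from the apex.''
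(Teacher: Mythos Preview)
Your encoding is correct and the inequalities you extract from properties (1)--(3) are the right ones, but you have not given a bijection: the ``emission rule'' is the entire content of the map, and you leave it unspecified. You flag this yourself as ``the real obstacle,'' and it is. Until the rule is written down there is nothing to verify---not that the output stays above the diagonal, not that the step multiset is right, not injectivity, not surjectivity. The worries you list (the three regimes $r_h=0,1,\ge 2$; the step-count problem for long runs; the need to distinguish $P_L$-runs from $P_R$-runs) are exactly the places where a wrong choice breaks the map, so ``choose it so that everything works'' is not yet an argument. As written this is a plan, not a proof.

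The paper sidesteps the level-by-level emission problem with a single geometric move. Every $E$-run in the right profile that is bounded by two $S$ steps is deleted and reinserted into the left profile between the matching pair of $N$ steps, and the new $NE$ corner so created is marked. This collapses all $S$ steps into a contiguous terminal block, so the path now ends in $N\,E\,S^n$; delete those $n+2$ trailing steps and replace each marked $NE$ corner by a single diagonal step $D$. What remains is a Schr\"{o}der $(n-1)$-path: the endpoint is forced by your bookkeeping identity, and weak-above-diagonal is exactly your family of inequalities $s_k-\ell_{n+1-k}\ge k$. The $P_L$/$P_R$ dichotomy that worried you is carried precisely by the marking---an $NE$ corner becomes a $D$ if and only if its $E$-run came from the right profile---and this makes the inverse transparent: expand each $D$ to a marked $NE$, append $N\,E\,S^n$, and ship each marked $E$-run back to the right profile at the matching height. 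None of the case analysis you anticipate is needed.
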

\begin{pspicture}(-2.5,-2.5)(3,4.5)
\psset{unit=.4cm}
\psline[linecolor=gray](-5,0)(5,0)\psline[linecolor=gray](-5,1)(5,1)\psline[linecolor=gray](-5,2)(5,2)\psline[linecolor=gray](-5,3)(5,3)\psline[linecolor=gray](-5,4)(5,4)\psline[linecolor=gray](-5,5)(5,5)
\psline[linecolor=gray](-5,6)(5,6)\psline[linecolor=gray](-5,7)(5,7)\psline[linecolor=gray](-5,8)(5,8)\psline[linecolor=gray](-5,9)(5,9)\psline[linecolor=gray](-5,10)(5,10)
\psline[linecolor=gray](-5,0)(-5,10)\psline[linecolor=gray](-4,0)(-4,10)\psline[linecolor=gray](-3,0)(-3,10)\psline[linecolor=gray](-2,0)(-2,10)
\psline[linecolor=gray](-1,0)(-1,10)\psline[linecolor=gray](0,0)(0,10)\psline[linecolor=gray](1,0)(1,10)\psline[linecolor=gray](2,0)(2,10)\psline[linecolor=gray](3,0)(3,10)
\psline[linecolor=gray](4,0)(4,10)\psline[linecolor=gray](5,0)(5,10)
\psline[linewidth=2pt](-5,0)(-5,5)(-2,5)(-2,9)(1,9)(1,10)(2,10)(2,7)(4,7)(4,3)(5,3)(5,0)
\psdots[linewidth=.5pt,linecolor=white](4,7)(2,7)
\psdots[linewidth=.5pt,linecolor=white](4,3)(5,3)
\rput(6.5,5){$\rightarrow$}
\rput(0,-1){a)}
\end{pspicture}
\begin{pspicture}(-2,-2.5)(3,4.5)
\psset{unit=.4cm}
\psline[linecolor=gray](-5,0)(5,0)\psline[linecolor=gray](-5,1)(5,1)\psline[linecolor=gray](-5,2)(5,2)\psline[linecolor=gray](-5,3)(5,3)\psline[linecolor=gray](-5,4)(5,4)\psline[linecolor=gray](-5,5)(5,5)
\psline[linecolor=gray](-5,6)(5,6)\psline[linecolor=gray](-5,7)(5,7)\psline[linecolor=gray](-5,8)(5,8)\psline[linecolor=gray](-5,9)(5,9)\psline[linecolor=gray](-5,10)(5,10)
\psline[linecolor=gray](-5,0)(-5,10)\psline[linecolor=gray](-4,0)(-4,10)\psline[linecolor=gray](-3,0)(-3,10)\psline[linecolor=gray](-2,0)(-2,10)
\psline[linecolor=gray](-1,0)(-1,10)\psline[linecolor=gray](0,0)(0,10)\psline[linecolor=gray](1,0)(1,10)\psline[linecolor=gray](2,0)(2,10)\psline[linecolor=gray](3,0)(3,10)
\psline[linecolor=gray](4,0)(4,10)\psline[linecolor=gray](5,0)(5,10)
\psline[linewidth=2pt](-5,0)(-5,3)(-4,3)(-4,5)(-1,5)(-1,7)(1,7)(1,9)(4,9)(4,10)(5,10)(5,0)
\psdots[linewidth=2pt,linecolor=blue](-5,3)(-1,7)
\rput(6.5,5){$\rightarrow$}
\rput(0,-1){b)}
\rput(0,-2.8){Bijection from bounding staircases to one-size-smaller Schr\"{o}der paths}
\rput(0,-4.5){Figure 2.}
\end{pspicture}
\begin{pspicture}(-2,-2.5)(3,4.5)
\psset{unit=.4cm}
\psline[linecolor=gray](-5,0)(4,0)\psline[linecolor=gray](-5,1)(4,1)\psline[linecolor=gray](-5,2)(4,2)\psline[linecolor=gray](-5,3)(4,3)\psline[linecolor=gray](-5,4)(4,4)\psline[linecolor=gray](-5,5)(4,5)
\psline[linecolor=gray](-5,6)(4,6)\psline[linecolor=gray](-5,7)(4,7)\psline[linecolor=gray](-5,8)(4,8)\psline[linecolor=gray](-5,9)(4,9)
\psline[linecolor=gray](-5,0)(-5,9)\psline[linecolor=gray](-4,0)(-4,9)\psline[linecolor=gray](-3,0)(-3,9)\psline[linecolor=gray](-2,0)(-2,9)
\psline[linecolor=gray](-1,0)(-1,9)\psline[linecolor=gray](0,0)(0,9)\psline[linecolor=gray](1,0)(1,9)\psline[linecolor=gray](2,0)(2,9)\psline[linecolor=gray](3,0)(3,9)
\psline[linecolor=gray](4,0)(4,9)
\psline[linewidth=2pt](-5,0)(-5,2)(-4,3)(-4,5)(-1,5)(-1,6)(0,7)(1,7)(1,9)(4,9)
\psline[linecolor=gray](-5,0)(4,9)
\rput(0,-1){c)}
\end{pspicture}
\begin{proof} Delete each run of East steps bounded by two $S$ steps (Fig.\ 2a), insert it between the matching $N$ steps,
and color the newly introduced $NE$ corner blue (Fig.\ 2b). Then delete the last $n+2$ steps
(necessarily $N\,E\,S^n$) and replace each blue $NE$ corner with a diagonal step $D=(1,1)$ to get the
desired Schr\"{o}der path (Fig. 2c). \qed
\begin{lemma} \label{bij2} A permutation $p$ avoids $\{3214,4213\}$ if and only if it is lexicographically least among all
permutations with the same bounding staircase as $p$. 
\end{lemma}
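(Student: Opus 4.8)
The plan is to prove the ``if and only if'' in two steps, the second being the substantial one. First I set up the bookkeeping. Two permutations have the same bounding staircase exactly when they have the same left‑to‑right maxima and the same right‑to‑left maxima, viewed as sets of (position, value) pairs; so the staircase records, for each position $i$, whether $i$ carries an LR maximum, an RL maximum, or neither, together with the values at the maximum positions. Call a position \emph{free} if it carries neither kind of maximum. In any permutation with a given staircase the entry at a free position $i$ must lie strictly below $c_i:=\min(a_i,b_i)$, where $a_i$ is the value of the last LR maximum before $i$ and $b_i$ the value of the first RL maximum after $i$ (this is precisely the height of the staircase over column $i$ in Figure~1); conversely a straightforward check shows that any filling of the free positions by the remaining values, each entry below its own ceiling $c_i$, yields a permutation with the prescribed staircase. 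Consequently the lexicographically least permutation with a fixed staircase is the one obtained by scanning the free positions from left to right and, at each, inserting the smallest still‑unused value that lies below the current ceiling.

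\emph{Easy direction: if $p$ contains $3214$ or $4213$ then $p$ is not lexicographically least.} Both patterns share the common shape: there exist $i_1<i_2<i_3<i_4$ with $p_{i_1}>p_{i_2}>p_{i_3}$ and $p_{i_4}>p_{i_2}$ (and, conversely, any such quadruple realizes $3214$ or $4213$ according as $p_{i_4}$ exceeds $p_{i_1}$ or not). Given such a quadruple, $p_{i_2}$ is blocked from being an LR maximum by $p_{i_1}$ and from being an RL maximum by $p_{i_4}$, and likewise $p_{i_3}$ is blocked by $p_{i_2}$ on the left and by $p_{i_4}$ on the right, so positions $i_2$ and $i_3$ are both free. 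Let $p'$ be $p$ with the entries in positions $i_2$ and $i_3$ interchanged; then $p'<_{\mathrm{lex}}p$. Interchanging two entries in positions $i_2<i_3$ leaves every prefix‑maximum and every suffix‑maximum unchanged outside the interval $[i_2,i_3]$, while inside it the only possible effects are to lower a prefix‑maximum or raise a suffix‑maximum; but $p_{i_1}$ (a larger value to the left of $i_2$) prevents any new LR maximum from appearing, and $p_{i_4}$ (a value exceeding both $p_{i_2}$ and $p_{i_3}$, lying to the right of $i_3$) prevents any RL maximum from being destroyed and keeps $i_2,i_3$ themselves free. Hence $p'$ has exactly the LR and RL maxima of $p$, so $p$ is not lexicographically least.

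\emph{Hard direction: if $p$ avoids $\{3214,4213\}$ then $p$ is lexicographically least for its staircase.} I would argue the contrapositive. Suppose $p$ is not lexicographically least, and let $q$ be the lexicographically least permutation with the same staircase; by the easy direction $q$ avoids $\{3214,4213\}$, and $q\ne p$. Let $j$ be the first position with $q_j\ne p_j$; then $j$ is free and $q_j<p_j$. The value $q_j$ occupies some free position $k>j$ of $p$, and with $i_1$ the position of the running maximum of $p$ just before $j$ (which exists, since position $1$ is never free) the triple $(i_1,j,k)$ is a $321$‑occurrence in $p$, because $p_{i_1}>p_j>p_k=q_j$. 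If some position $i_4>k$ has $p_{i_4}>p_j$, then $(i_1,j,k,i_4)$ realizes $3214$ or $4213$ in $p$, contradicting avoidance. Otherwise every entry of $p$ past position $k$ is below $p_j$; one then uses that $q$ places the value $p_j$ at some free position $\ell>j$, whence $p_j<c_\ell\le b_\ell$, the value of the first RL maximum $r$ after $\ell$. Since $p_r=b_\ell>p_j$ while everything of $p$ past position $k$ lies below $p_j$, position $r$ falls strictly between $j$ and $k$ (here $j<\ell<r<k$). Avoidance of $\{3214,4213\}$ by $p$ then forces all of $p_{j+1},\dots,p_r$ to exceed $p_j$ (a value $p_c<p_j$ with $j<c<r$ would give the forbidden quadruple $(i_1,j,c,r)$), and from this rigid configuration, together with the greedy/minimal nature of the filling $q$ and the ceiling constraints, one derives the required contradiction — equivalently, one concludes that each bounding staircase has exactly one $\{3214,4213\}$‑avoider, which must be the lexicographically least permutation with that staircase.

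The main obstacle is precisely that last step of the hard direction: after ruling out a large entry following the displaced small value, one must still extract a contradiction (equivalently, prove uniqueness of the avoider attached to a given staircase). The delicacy is that the ceilings $c_i$ are not monotone in $i$, so the greedy description of $q$ interacts non‑trivially with the requirement that a full valid completion exist; the structural consequence of avoidance established above — that beyond any free position all entries exceeding that position's value precede all entries below it — has to be played off carefully against the accounting of which small values the lexicographically least filling is compelled to use early. I expect this to be where most of the work in a complete proof resides.
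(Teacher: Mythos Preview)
Your forward direction is correct and is exactly the paper's argument: from an occurrence of either pattern one extracts $xbay$ with $x,y>b>a$, observes that $b$ and $a$ are then both free, and swaps $a$ and $b$ to obtain a lexicographically smaller permutation with the same LR and RL maxima.

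The converse, however, you leave explicitly unfinished --- you write ``one derives the required contradiction'' and then concede that ``this is where most of the work in a complete proof resides.'' That is the gap. The paper dispatches this direction in one sentence, and the idea you are missing is that no comparison with the greedy filling $q$, no tracking of where $q$ places the value $p_j$, no analysis of the ceilings $c_i$, and no case split on the location of the first RL maximum is needed at all. The paper simply notes: if $p$ is not lex-least then there is an inversion $ba$ with $b>a$ and neither $b$ nor $a$ an LR or RL maximum (this is immediate --- if the free entries were increasing then at each free position $p$ would be using the globally smallest remaining free value, so $p$ would already be lex-least). Now $b$, not being an LR max, has some $x>b$ to its left; and $b$, not being an RL max, has some $y>b$ to its right. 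The quadruple $xbay$ is then a $3214$ or a $4213$. That is the whole argument; the paper never looks at a second permutation $q$ in this direction.

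So your setup for the hard direction is heavily over-engineered. Everything from ``let $q$ be the lexicographically least permutation'' onward can be discarded and replaced by the two-line observation above: a free inversion exists, and the failure of $b$ to be either kind of maximum supplies the flanking large entries. The obstacle you describe --- non-monotone ceilings interacting with the greedy description of $q$ --- is an artefact of the route you chose, not of the problem.
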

\noindent Proof. If either offending pattern is present in $p$, then there is also a subsequence $xbay$ with $x$
a LR max, $y$ a RL max, $b>a$ and $x,y$ both $>b$. Switching the $a$ and $b$ gives a lexicographically smaller permutation with the
same LR max/RL max, both in value and position, and hence the same bounding staircase. Conversely, if $p$ is not lexicographically least,
then a $ba$ is present with $b>a$ and neither $a$ nor $b$
a LR max or RL max, implying that $ba$ is the ``21'' of an offending pattern. 
\end{proof}

\noindent \textbf{Remark.} To construct this lexicographically least permutation, use the bounding staircase to fill
the LR max and RL max slots in the permutation, then
fill the remaining slots right to left in turn with the largest available entry that will not create a new RL max.

\begin{cor} \label{bij3} 
The map ``permutation $\rightarrow$ bounding staircase'' is a bijection 
from \linebreak$S_n(3214,4213)$ to bounding staircases of size $n$.  
\end{cor}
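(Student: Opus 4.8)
The plan is to derive Corollary \ref{bij3} directly from Lemma \ref{bij2}, using the fact, recorded just before Figure 1, that every permutation of $[n]$ has a well-defined bounding staircase of size $n$. First I would observe that the assignment ``permutation $\rightarrow$ bounding staircase'' is well defined on all of $S_n$, hence on $S_n(3214,4213)$, and takes values among the bounding staircases of size $n$; so it remains to check injectivity and surjectivity of this restricted map.

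For injectivity, suppose $p,q\in S_n(3214,4213)$ have the same bounding staircase $\sigma$. By Lemma \ref{bij2}, each of $p$ and $q$ is the lexicographically least permutation among all permutations whose bounding staircase is $\sigma$. Since a nonempty finite linearly ordered set has exactly one least element, $p=q$. For surjectivity, fix a bounding staircase $\sigma$ of size $n$, and let $P_\sigma$ be the set of permutations of $[n]$ whose bounding staircase is $\sigma$. The crucial point is that $P_\sigma\neq\varnothing$: this is the content of the characterization of bounding staircases by properties (1)--(3), and it is realized explicitly by the greedy recipe in the Remark (fill the LR-maximum and RL-maximum slots prescribed by $\sigma$, then fill the remaining slots from right to left, each time inserting the largest unused entry that does not create a new RL maximum). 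Granting $P_\sigma\neq\varnothing$, let $p^{*}$ be its lexicographically least element; then $p^{*}$ is lexicographically least among all permutations with the same bounding staircase as $p^{*}$ (namely $\sigma$), so Lemma \ref{bij2} gives $p^{*}\in S_n(3214,4213)$, and $p^{*}$ maps to $\sigma$. Hence $\sigma$ lies in the image, completing the proof; composing with Proposition \ref{bij1} then matches $S_n(3214,4213)$ with the Schr\"{o}der $(n-1)$-paths.

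The main obstacle is the verification that $P_\sigma\neq\varnothing$ for every lattice path $\sigma$ meeting conditions (1)--(3) --- equivalently, that the greedy procedure in the Remark never gets stuck and returns a genuine permutation of $[n]$ whose bounding staircase is $\sigma$. This is exactly where property (3) is used: reading from the top, the $i$-th matching $N/S$ pair being at least $i$ units apart leaves just enough room to seat the entries required below the corresponding maxima without forcing an unwanted new maximum, while the ``$1$ unit apart'' condition on the topmost pair correctly brackets the entry $n$. Once this bookkeeping is in hand, the injectivity and surjectivity arguments above go through verbatim and the corollary follows.
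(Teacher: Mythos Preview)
Your argument is correct and is exactly the intended one: the paper gives no separate proof for this corollary, treating it as immediate from Lemma~\ref{bij2} together with the Remark and the stated characterization of bounding staircases, and you have simply spelled out the implicit injectivity/surjectivity reasoning. Your honest flagging of the nonemptiness of $P_\sigma$ as the only substantive point matches the paper's reliance on properties (1)--(3) and the greedy construction in the Remark; neither you nor the paper carries out that bookkeeping in full, so your level of rigor is at least that of the original.
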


Combining this bijection with that of Prop.\,\ref{bij1}, we have a bijection $\phi:S_n(3214,4213) \rightarrow $ Schr\"{o}der $(n-1)$-paths.

\begin{cor}\emph{\cite{kremer2000}} 
$\v S_n(3214,4213)\v = r_{n-1}$, the large Schr\"{o}der number.
\end{cor}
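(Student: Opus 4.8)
The plan is to read the count directly off the chain of bijections that has just been assembled, so that no new combinatorics is needed. First I would recall the composite map $\phi\colon S_n(3214,4213)\to\{\text{Schr\"{o}der }(n-1)\text{-paths}\}$: by Corollary~\ref{bij3} the assignment ``permutation $\mapsto$ bounding staircase'' is a bijection from $S_n(3214,4213)$ onto the set of bounding staircases of size $n$, and by Proposition~\ref{bij1} the bounding staircases of size $n$ are in bijection with the Schr\"{o}der $(n-1)$-paths; composing the two shows that $\phi$ is itself a bijection.

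Next I would invoke the fact recalled just before Figure~1 that the number of Schr\"{o}der $m$-paths is the large Schr\"{o}der number $r_m$ (sequence A006318). Taking $m=n-1$ and using that $\phi$ is a bijection gives $\v S_n(3214,4213)\v = r_{n-1}$, which is exactly the assertion; this re-derives the result of \cite{kremer2000} through an explicit path model rather than through a generating-function or transfer-matrix computation.

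Strictly speaking there is no real obstacle here beyond bookkeeping, the one point worth double-checking being the size shift: a bounding staircase of \emph{size} $n$ must correspond to a Schr\"{o}der path of size $n-1$, not of size $n$. This is already handled inside the proof of Proposition~\ref{bij1}. A bounding staircase of size $n$ uses $n$ North, $n$ East and $n$ South steps; after the East runs flanked by two $S$'s have been moved up (creating, say, $k$ blue $NE$ corners) and the trailing block $N\,E\,S^n$ has been deleted, one is left with a lattice path using $n-1-k$ North steps, $n-1-k$ East steps and $k$ diagonal steps, whose size is therefore $(n-1-k)+k=n-1$. With this confirmed upstream, the corollary follows immediately, with no further calculation required.
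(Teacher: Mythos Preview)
Your proposal is correct and follows exactly the paper's approach: the corollary is stated without a proof environment because it is immediate from the bijection $\phi$ just assembled from Corollary~\ref{bij3} and Proposition~\ref{bij1}, together with the definition of $r_m$ as the number of Schr\"{o}der $m$-paths. Your extra paragraph verifying the size shift $n\mapsto n-1$ is a helpful sanity check but is already contained in the proof of Proposition~\ref{bij1}.
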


\begin{prop}\label{bij4} 
The restriction $\phi_{|S_n(\Pi_4)}$ is a bijection from $S_n(\Pi_4)$ to 
Schr\"{o}der $(n-1)$-paths in which each component has at most one peak. 
\end{prop}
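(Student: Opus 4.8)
The plan is to build on the bijection $\phi\colon S_n(3214,4213)\to\{\text{Schr\"{o}der }(n-1)\text{-paths}\}$ obtained by composing the maps of Corollary~\ref{bij3} and Proposition~\ref{bij1}. Since $S_n(\Pi_4)=S_n(3214,4213)\cap S_n(2314)$ and $\phi$ is already a bijection on $S_n(3214,4213)$, the proposition reduces to proving the single equivalence: for $p\in S_n(3214,4213)$, the permutation $p$ contains $2314$ if and only if $\phi(p)$ has a component with two or more peaks. (Surjectivity of the restriction onto the stated image is then automatic from surjectivity of $\phi$.)

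The core of the plan is to read the two relevant path statistics of $\phi(p)$ back as statistics of $p$. Unwinding Proposition~\ref{bij1}: each blue corner, hence each $D$ step of $\phi(p)$, sits at the height of a right-to-left maximum of $p$, while the remaining $NE$-corners of $\phi(p)$ are exactly the $NE$-corners of the original up-staircase, which sit at the heights of the left-to-right maxima; the corner at height $n$ is absorbed by the deleted $NES^n$ tail, and there is no clash since a value that is simultaneously an LR and an RL maximum must equal $n$. Hence the peaks of $\phi(p)$ correspond bijectively to the left-to-right maxima of $p$ that are $<n$. Likewise, a prefix of $\phi(p)$ ends on the diagonal precisely when the matching prefix of the up-staircase carries equally many $N$ and $E$ steps, and one checks that this matches the $\oplus$-decomposition of $p$: writing $p=q_1\oplus\cdots\oplus q_k$ as a sum of $\oplus$-indecomposables, the components of $\phi(p)$ are the $q_i$ (with the block $q_k$ that contains $n$ contributing a component with one fewer peak, and merged with $q_{k-1}$ when $q_k$ has length $1$). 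Putting the two together, ``$\phi(p)$ has a component with two peaks'' becomes the intrinsic statement that $p$ has two left-to-right maxima $m<m'<n$ lying in a common $\oplus$-indecomposable block.

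It then remains to show that this intrinsic condition on $p$ is equivalent to $p$ containing $2314$, using that $p$ avoids $3214$ and $4213$. For the forward direction, from two such maxima $m<m'<n$ in one block I would invoke the reconstruction recipe of the Remark (fill the LR/RL-maximum slots from the staircase, then fill the remaining slots from right to left with the largest entry that creates no new RL maximum): these forced choices place some entry below $m$ at a position between those of $m'$ and of the leftmost RL maximum larger than $m'$ (which lies to the right of $m'$), and $m,\,m',\,(\text{that entry}),\,(\text{that RL maximum})$ is then a $2314$. For the converse I argue the contrapositive. If every $\oplus$-block of $p$ carries at most one left-to-right maximum below $n$, then each block other than the last begins with its maximum, and avoidance of $3214$ and $4213$ forces the part of such a block after its maximum to avoid $213$; the last block, which begins with a left-to-right maximum $m_1<n$ and contains $n$, is forced to have its part strictly between $m_1$ and $n$ increasing (otherwise $m_1$ together with a descent there and $n$ forms a $3214$) and its part after $n$ to avoid $213$ (otherwise $n$ together with such a $213$ forms a $4213$). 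A short pattern check on one block then shows $p$ has no $2314$. The principal obstacle is the middle paragraph: making the dictionary ``peaks $\leftrightarrow$ LR maxima below $n$, components $\leftrightarrow$ $\oplus$-blocks'' precise by carefully following the reinsertion of East runs, the blue colouring, and the deletion of the length-$(n+2)$ tail. With that dictionary secured, the pattern bookkeeping of the last paragraph is elementary, if a bit tedious.
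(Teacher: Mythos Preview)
Your reduction to the single equivalence ``$p$ contains $2314$ $\Leftrightarrow$ $\phi(p)$ has a component with at least two peaks'' is correct, and your identification of peaks of $\phi(p)$ with LR maxima of $p$ strictly below $n$ is fine. The error is in the second half of your dictionary: the components of $\phi(p)$ do \emph{not} correspond to the $\oplus$-indecomposable blocks of $p$.

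A concrete counterexample is $p = 2\,4\,5\,3\,1 \in S_5(3214,4213)$. This permutation is $\oplus$-indecomposable (its last entry is $1$), has LR maxima $2,4,5$ and RL maxima $1,3,5$, and one checks directly that it avoids $2314$. Running the construction of Proposition~\ref{bij1} gives $\phi(p) = D\,N\,E\,D\,N\,E$, which has \emph{four} components, each with at most one peak. Under your dictionary, $p$ would be a single block carrying two LR maxima below $n$ (namely $2$ and $4$), so you would predict a component of $\phi(p)$ with two peaks and hence a $2314$ in $p$ --- both false. Your forward-direction argument also visibly breaks here: the leftmost RL maximum exceeding $m'=4$ is $5$, sitting in the position \emph{immediately} after $m'$, so there is no slot between them in which to find your ``$1$''.

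What actually governs the returns to the diagonal is the two-sided decomposition the paper uses: a return at height $j$ corresponds to a splitting $p = A\,B\,C$ with $A$ a (possibly empty) prefix ending just before an LR maximum, $C$ a (possibly empty) suffix beginning just after an RL maximum, and $A\cup C = \{1,\dots,j\}$. This is strictly finer than the $\oplus$-decomposition because $C$ may be nonempty; in the example above, $(A,C) = (\{2\},\{1\})$ and $(A,C)=(\{2,4\},\{3,1\})$ give the extra returns. The paper's inductive argument then peels off the \emph{shortest} such $A\cup C$ (the first component of $\phi(p)$), observes that the ``$2$'' and ``$3$'' of any $2314$ are LR maxima lying together in $A$ or together in $B$, and reduces to the one-component case $B = \{n\}$. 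Your $\oplus$-based scheme cannot make this reduction, and the pattern bookkeeping you sketch in the last paragraph does not go through without it.
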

\begin{proof}
In a 2314 pattern in a $\{3214,4213\}$-avoider $p$, the ``2'' and ``3'' must be LR maxima of $p$, and LR maxima in
the permutation correspond to peaks in the Schr\"{o}der path. Now consider the insertion of two dividers in $p$, one just before a LR max
and the other just after a RL max, to split $p$ into three segments $A,B,C$. Necessarily, $n \in B$ while $A,C$ may be empty.
Returns to $y=x$ in the Schr\"{o}der path correspond to such insertions for which $A \cup C$ is a nonempty initial segment of the positive integers. The shortest $AC$ thus corresponds to the first component of the Schr\"{o}der path. The ``2'' and ``3'' of the 2314 pattern either both lie in $A$ or both lie in $B$. If they lie in $A$, the ``1'' cannot lie in $B$. These observations are the basis for an inductive proof and allow us to assume that, in addition to $AC$ being shortest, $B$ is the singleton $n$, and so the Schr\"{o}der path has just one component.
If a 2314 is present, the ``2'' and ``3'' produce two peaks. On the other hand, if there are two peaks, they produce a ``2'' and ``3'',
and there must also be present a ``1'' and ``4'' to make a 2314 for otherwise $AC$ would not be shortest. 
\end{proof}

We have the following elementary counts for Schr\"{o}der paths.
\begin{lemma} \label{schrPeaks} For $n\ge 1$,\\
\emph{(i)} \emph{\cite[Ex.\,45]{stanleyCat}} The number of Schr\"{o}der $n$-paths with no peaks is the Catalan number $C_n$. \\
\emph{(ii)} \emph{[See \htmladdnormallink{A060693}{http://oeis.org/A060693}]} The number of  
Schr\"{o}der $n$-paths with exactly $1$ peak is $\binom{2n-1}{n-1}$.  
\end{lemma}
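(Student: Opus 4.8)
The plan is to derive both statements at once from a single bivariate generating function $S(x,y)$, in which the exponent of $x$ records the size of a Schr\"oder path and the exponent of $y$ records its number of peaks, by extracting the coefficients of $y^{0}$ and $y^{1}$. (Part~(i) also follows at once from the cited \cite[Ex.\,45]{stanleyCat}, but the bivariate set-up handles both parts uniformly.)

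First I would set up the first-return decomposition of a nonempty Schr\"oder path $P$: either $P=D\,Q$ with $Q$ an arbitrary Schr\"oder path, or $P=N\,R\,E\,Q$, where the portion of $P$ strictly above the diagonal before its first return is, after translation, an arbitrary (possibly empty) Schr\"oder path $R$ --- the first return being necessarily via an $E$ step --- and $Q$ is an arbitrary Schr\"oder path. The delicate point is tracking peaks: translation preserves the peaks internal to $R$; the initial $N$ is followed by the first step of $R$ (an $N$ or a $D$) when $R\neq\emptyset$, so it produces a peak exactly when $R=\emptyset$, namely the factor $N\,E$; the last step of $R$ is an $E$ or a $D$, and the returning $E$ is followed by an $N$ or a $D$ (or by nothing), so no peak is created at either of those junctions, and similarly none at the $D$-$Q$ junction. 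Assigning size weight $x$ to each $N$ and each $D$ step, weight $1$ to each $E$ step, and weight $y$ to each peak, I obtain
\begin{equation*}
S(x,y)=1+x\,S(x,y)+xy\,S(x,y)+x\bigl(S(x,y)-1\bigr)S(x,y)=1+xy\,S(x,y)+x\,S(x,y)^{2},
\end{equation*}
the summands $x\,S(x,y)$, $xy\,S(x,y)$, and $x\bigl(S(x,y)-1\bigr)S(x,y)$ accounting, in order, for the $D$-start, for the decomposition with $R=\emptyset$, and for the decomposition with $R\neq\emptyset$.

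Next I would write $S(x,y)=S_{0}(x)+S_{1}(x)\,y+O(y^{2})$ and compare coefficients of powers of $y$. The coefficient of $y^{0}$ gives $S_{0}=1+x\,S_{0}^{2}$, so $S_{0}$ is the Catalan generating function and the number of peak-free Schr\"oder $n$-paths is $[x^{n}]S_{0}=C_{n}$; this is part~(i). The coefficient of $y^{1}$ gives $S_{1}=x\,S_{0}+2x\,S_{0}S_{1}$, hence $S_{1}=\dfrac{x\,S_{0}}{1-2x\,S_{0}}$; since $2x\,S_{0}=1-\sqrt{1-4x}$ this collapses to $S_{1}=\dfrac12\Bigl(\dfrac{1}{\sqrt{1-4x}}-1\Bigr)$, and therefore, for $n\ge1$, the number of Schr\"oder $n$-paths with exactly one peak equals $[x^{n}]S_{1}=\tfrac12\binom{2n}{n}=\binom{2n-1}{n-1}$; this is part~(ii).

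I expect the only genuine obstacle to be the peak bookkeeping in the first-return decomposition --- checking that, apart from the single explicitly marked $N\,E$ factor, no peak is created or destroyed where the pieces $R$, $E$, $Q$ (or $D$, $Q$) are glued together --- together with the small but necessary identity $\tfrac12\binom{2n}{n}=\binom{2n-1}{n-1}$. The remaining steps are routine manipulations of algebraic generating functions.
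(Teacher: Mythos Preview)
Your argument is correct. The first-return decomposition is set up properly, and the peak bookkeeping---the only delicate point---is handled carefully: when $R\neq\emptyset$ its first step is $N$ or $D$ and its last step is $E$ or $D$, so no $NE$ is created or destroyed at any of the gluing points, and the single new peak when $R=\emptyset$ is exactly what your $xy\,S$ term records. The functional equation $S=1+xyS+xS^{2}$ follows, and the extractions of $S_{0}$ and $S_{1}$, together with the identity $\tfrac12\binom{2n}{n}=\binom{2n-1}{n-1}$ for $n\ge 1$, are routine and correct.

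As for comparison with the paper: the paper does not prove this lemma at all. It treats both parts as known, citing \cite[Ex.\,45]{stanleyCat} for~(i) and the OEIS entry A060693 for~(ii), and moves on immediately. So your bivariate generating-function argument is genuinely additional content: it supplies a short, self-contained, uniform proof where the paper relies on external references. One could also argue~(i) bijectively (replace each $D$ by $NE$ to get a Dyck path) and then do~(ii) by a separate count, but your approach has the virtue of handling both parts with a single equation and fits naturally with the generating-function computations that follow in the paper.
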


An indecomposable Schr\"{o}der path of size $n\ge 2$ has the form $NPE$ with $P$ a Schr\"{o}der path of size $n-1$; hence we have

\begin{cor}  \label{indecCnts}
\emph{(i)} The number of indecomposable Schr\"{o}der $n$-paths with no peaks is
$2$ for $n=1$ and $C_{n-1}$ for $n\ge 2$. \\
\emph{(ii)} The number of indecomposable Schr\"{o}der $n$-paths with exactly $1$ peak is $0$ for $n=1$
and $\binom{2n-3}{n-2}$ for $n\ge 2$. 
\end{cor}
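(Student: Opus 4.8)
The plan is to deduce both parts from the structural description recalled immediately before the statement --- for $n\ge 2$, every indecomposable Schr\"{o}der $n$-path is uniquely of the form $NPE$ with $P$ a Schr\"{o}der $(n-1)$-path --- together with the enumerations in Lemma~\ref{schrPeaks}. The single point needing an argument is that, for $n\ge 2$, this correspondence $P\leftrightarrow NPE$ preserves the number of peaks; granting that, both formulas drop out. The value $n=1$ must be handled on its own, and it is worth saying why: when $P$ is empty the ``wrapping'' $NPE$ collapses to $NE$, whose two steps do form a peak, so peak-preservation genuinely fails there; moreover the single diagonal step $D$ is an indecomposable $1$-path that is not of the form $NPE$ at all. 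So for $n=1$ one just inspects the two indecomposable Schr\"{o}der $1$-paths, $D$ and $NE$, directly.

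For the peak-preservation claim, fix $n\ge 2$, so $P$ is nonempty, and recall that a peak is a consecutive step-pair $NE$. In $NPE$ the prepended $N$ is immediately followed by the first step of $P$; a Schr\"{o}der path cannot begin with an $E$ step, since that step would land strictly below $y=x$, so this first step is $N$ or $D$ and the prepended $N$ belongs to no peak. Symmetrically, the appended $E$ is immediately preceded by the last step of $P$, which cannot be an $N$ step, since the initial vertex of such a step would lie below $y=x$; hence the appended $E$ belongs to no peak either. Every remaining consecutive step-pair of $NPE$ is a consecutive step-pair of $P$, and conversely, so $NPE$ and $P$ have the same number of peaks.

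With this established, part (i) for $n\ge 2$ follows at once: the indecomposable Schr\"{o}der $n$-paths with no peaks correspond bijectively to the Schr\"{o}der $(n-1)$-paths with no peaks, and there are $C_{n-1}$ of the latter by Lemma~\ref{schrPeaks}(i), which applies since $n-1\ge 1$. Part (ii) for $n\ge 2$ is the same argument with ``no peaks'' replaced by ``exactly one peak'': the count is the number of Schr\"{o}der $(n-1)$-paths with exactly one peak, namely $\binom{2(n-1)-1}{(n-1)-1}=\binom{2n-3}{n-2}$ by Lemma~\ref{schrPeaks}(ii).

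I do not expect a real obstacle. The only thing demanding care is the one flagged above: the peak-preservation step is valid only once $P$ is known to be nonempty, which is precisely why the base case $n=1$ has to be separated off and checked by hand.
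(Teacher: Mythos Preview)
Your approach is exactly the paper's: the paper writes nothing more than ``hence we have'' after noting the $NPE$ form for $n\ge 2$, and you supply the details of precisely that deduction, including the peak-preservation argument that justifies it. One caveat worth flagging is that your own observation that $NE$ is a peak means direct inspection at $n=1$ yields $1$ peakless path ($D$) and $1$ one-peak path ($NE$), not the $2$ and $0$ printed in the corollary---a harmless slip in the paper's statement, since Proposition~\ref{indecGF} uses only their sum.
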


\begin{prop} \label{indecGF} The \gf for indecomposable Schr\"{o}der paths with at most $1$ peak is
\[
\frac{1}{2} \left(1+x+\frac{x}{\sqrt{1-4 x}}-\sqrt{1-4 x}\right)\, .
\]
\end{prop}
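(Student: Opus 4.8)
The plan is to split the class of indecomposable Schr\"{o}der paths with at most one peak into the two disjoint subclasses ``no peak'' and ``exactly one peak'', read off each generating function from Corollary~\ref{indecCnts}, sum the resulting series in closed form using the Catalan and central-binomial generating functions, and add.

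First I would handle the peak-free part. By Corollary~\ref{indecCnts}(i) its generating function is $N(x)=2x+\sum_{n\ge2}C_{n-1}x^n$. Re-indexing and using $C(x):=\sum_{n\ge0}C_nx^n=\frac{1-\sqrt{1-4x}}{2x}$ turns this into $N(x)=2x+x\bigl(C(x)-1\bigr)=x+xC(x)=x+\frac{1-\sqrt{1-4x}}{2}$.

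Next the one-peak part. By Corollary~\ref{indecCnts}(ii) its generating function is $P(x)=\sum_{n\ge2}\binom{2n-3}{n-2}x^n=x\sum_{m\ge1}\binom{2m-1}{m-1}x^m$. Here I would use the identity $\binom{2m-1}{m-1}=\tfrac12\binom{2m}{m}$ (valid for $m\ge1$) together with $\sum_{m\ge0}\binom{2m}{m}x^m=\frac{1}{\sqrt{1-4x}}$ to get $P(x)=\frac{x}{2}\bigl(\frac{1}{\sqrt{1-4x}}-1\bigr)=\frac{x}{2\sqrt{1-4x}}-\frac{x}{2}$. Adding, the desired generating function is $N(x)+P(x)=x+\frac{1-\sqrt{1-4x}}{2}+\frac{x}{2\sqrt{1-4x}}-\frac{x}{2}$, which collapses to $\frac12\bigl(1+x+\frac{x}{\sqrt{1-4x}}-\sqrt{1-4x}\bigr)$, as claimed.

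There is no serious obstacle here --- everything is routine series manipulation --- but the one point that needs care is the bookkeeping of the exceptional $n=1$ terms in Corollary~\ref{indecCnts}: the no-peak count is $2$ rather than $C_0=1$ at $n=1$, and the one-peak count is $0$ rather than the nominal value of $\binom{2n-3}{n-2}$ at $n=1$, so the Catalan and binomial sums must be started at $n=2$ and the $n=1$ contributions added by hand. (Alternatively, one could bypass Corollary~\ref{indecCnts} and argue directly from the decomposition $NPE$ of an indecomposable path of size $\ge2$: wrapping a \emph{nonempty} Schr\"{o}der path $P$ in an $N\cdots E$ pair creates no new peak, so an indecomposable path of size $n\ge2$ with at most one peak corresponds to an arbitrary Schr\"{o}der path of size $n-1$ with at most one peak; combining this with Lemma~\ref{schrPeaks} yields the same generating function.)
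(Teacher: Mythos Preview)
Your proposal is correct and is exactly the approach the paper has in mind: the paper's own proof is the single line ``Immediately by Corollary~\ref{indecCnts}'', and you have simply supplied the routine summations (Catalan and central-binomial generating functions) that turn those counts into the stated closed form. Your bookkeeping of the exceptional $n=1$ contributions is handled correctly.
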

\begin{proof} Immediately by Corollary \ref{indecCnts}.
\end{proof}

\begin{cor} \label{gf1hiPeak}The \gf for Schr\"{o}der paths with at most $1$ peak in each component is
\[
\frac{2 \sqrt{1-4 x}}{1-5x+(1-x)\sqrt{1-4 x}}\, .
\]
\end{cor}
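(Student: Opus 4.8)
The plan is to combine Proposition~\ref{indecGF} with the standard factorization of a Schr\"oder path into its indecomposable components, exploiting the fact that the peak count is additive over that factorization. Every Schr\"oder path splits uniquely into components at the lattice points it shares with the line $y=x$, each component is indecomposable, and the empty path corresponds to the empty sequence of components. Since a peak is a step $N$ immediately followed by a step $E$, both of its steps lie strictly between two consecutive points of the path on $y=x$; hence each peak belongs to exactly one component, and the peaks of the whole path are exactly the peaks of its components taken together. Consequently ``Schr\"oder paths with at most one peak in each component'' is precisely the class of finite sequences of indecomposable Schr\"oder paths, each having at most one peak --- that is, of sequences of the objects enumerated in Proposition~\ref{indecGF}.

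Writing $I(x)$ for the \gf of Proposition~\ref{indecGF}, the sequence (composition) construction for generating functions gives at once
\[
\sum_{n\ge 0}(\text{number of Schr\"oder $n$-paths with at most one peak per component})\,x^n=\frac{1}{1-I(x)}\,.
\]
It remains to put the right-hand side in closed form. Substituting $I(x)=\tfrac12\bigl(1+x+\tfrac{x}{\sqrt{1-4x}}-\sqrt{1-4x}\bigr)$ gives $1-I(x)=\tfrac12\bigl(1-x-\tfrac{x}{\sqrt{1-4x}}+\sqrt{1-4x}\bigr)$; multiplying the numerator and denominator of this expression by $\sqrt{1-4x}$ turns it into $1-I(x)=\tfrac{1}{2\sqrt{1-4x}}\bigl((1-x)\sqrt{1-4x}+(1-4x)-x\bigr)=\tfrac{1}{2\sqrt{1-4x}}\bigl(1-5x+(1-x)\sqrt{1-4x}\bigr)$, and taking reciprocals yields the asserted formula $\dfrac{2\sqrt{1-4x}}{1-5x+(1-x)\sqrt{1-4x}}$. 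As a numerical check, this expands as $1+2x+6x^2+21x^3+79x^4+\cdots$, consistent with a direct enumeration for small sizes and with the sequence A111279 that reappears in Class~4.

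I do not anticipate any genuine obstacle here: once Proposition~\ref{indecGF} is available, the corollary is a one-line application of the sequence construction followed by routine simplification of a surd expression. The only point worth stating explicitly --- and which the first paragraph addresses --- is that the factorization into components never splits a peak, so that the restriction ``at most one peak per component'' is well-defined and compatible with the sequence construction; this follows immediately from the definition of a peak as a contiguous $NE$.
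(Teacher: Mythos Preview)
Your proof is correct and is essentially the paper's own argument spelled out in full: the paper's one-line proof says the desired \gf is the Invert transform of the \gf in Proposition~\ref{indecGF}, which is exactly your $\frac{1}{1-I(x)}$ via the sequence construction. Your explicit justification that no peak straddles two components and your algebraic simplification just make explicit what the paper leaves to the reader.
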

\begin{proof}
This \gf is the Invert transform of the \gf in Proposition \ref{indecGF}.
\end{proof}

\begin{cor}  The \gf for nonempty $\pi_4$-avoiders  is
\begin{equation} \label{corcase4}
\frac{2x \sqrt{1-4 x}}{1-5x+(1-x)\sqrt{1-4 x}}\, .
\end{equation}
\end{cor}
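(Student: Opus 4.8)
The plan is to obtain \eqref{corcase4} by reading the generating function directly off the two structural results already in hand. By Proposition~\ref{bij4}, the map $\phi$ restricts to a bijection between $S_n(\Pi_4)$ and the set of Schr\"{o}der $(n-1)$-paths each of whose components contains at most one peak; and Corollary~\ref{gf1hiPeak} gives the generating function for Schr\"{o}der paths with at most one peak per component (summed over all sizes $m\ge 0$) as $\dfrac{2\sqrt{1-4x}}{1-5x+(1-x)\sqrt{1-4x}}$. So the only remaining step is a matter of indexing: a $\Pi_4$-avoider counted by $x^n$ corresponds under $\phi$ to a path of size $n-1$, so the generating function for nonempty $\Pi_4$-avoiders is $x$ times the path generating function, namely $\dfrac{2x\sqrt{1-4x}}{1-5x+(1-x)\sqrt{1-4x}}$.

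Concretely, I would set $f(x)=\sum_{m\ge 0}s_m x^m$, where $s_m$ is the number of Schr\"{o}der $m$-paths with at most one peak in each component, observe that $s_0=1$ (the empty path), and invoke the bijection $\phi$ of Proposition~\ref{bij4} to get $\#S_n(\Pi_4)=s_{n-1}$ for every $n\ge 1$. Summing $\#S_n(\Pi_4)x^n$ over $n\ge 1$ and reindexing gives $x f(x)$, and substituting the closed form from Corollary~\ref{gf1hiPeak} for $f(x)$ yields \eqref{corcase4}.

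A small verification I would record as a check: evaluating the right-hand side of Corollary~\ref{gf1hiPeak} at $x=0$ gives $2/2=1$, so $f(x)=1+O(x)$ and hence $x f(x)=x+O(x^2)$, consistent with $\#S_1(\Pi_4)=1$. I would also remark, to connect with the Main Theorem, that adding $1$ for the empty permutation gives $1+x f(x)=\dfrac{1-5x+(1+x)\sqrt{1-4x}}{1-5x+(1-x)\sqrt{1-4x}}=\gfNEW$, exactly the generating function claimed in \eqref{maineqn} for $j=4$.

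There is no real obstacle here: the substantive work has already been carried out in Corollary~\ref{bij3} (permutation $\to$ bounding staircase), Proposition~\ref{bij1} (bounding staircase $\to$ Schr\"{o}der path), Proposition~\ref{bij4} (identifying the image of $S_n(\Pi_4)$), and the Invert-transform computation of Corollary~\ref{gf1hiPeak}. The only point needing care is the size shift built into $\phi$ — the factor of $x$ — together with confirming that the generating function of Corollary~\ref{gf1hiPeak} already includes the empty path, so that no separate discussion of small $n$ is needed.
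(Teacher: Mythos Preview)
Your proof is correct and follows exactly the paper's own approach: the paper's proof reads ``Immediately by Proposition~\ref{bij4} and Corollary~\ref{gf1hiPeak},'' and you have simply spelled out the size shift (the factor of $x$) that this entails.
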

\begin{proof} 
Immediately by Proposition \ref{bij4} and Corollary \ref{gf1hiPeak}.
\end{proof}
Adding 1 to (\ref{corcase4}) to include the empty permutation gives (\ref{maineqn}).

\subsection{Class 5}
$\Pi_5=\{3214,3241,4213\}$.
To characterize $\Pi_5$-avoiders, draw a horizontal line just below the last entry of a permutation $p$ as in Figure 3 to obtain two subpermutations, $A$ above the line (in blue) and $B$ below the line (in black).
Split $A$ into two segments, $A_1$ consisting of the entries weakly left of $n$
and $A_2$ consisting of the remaining entries. Here, $A_1= (10,13,18),
\ A_2=(14,15,17,16,11,12,9)$. Say an entry in $p$ is \emph{key} if it either lies in $A_1$ or is a LR min in $A_2$ (key entries are circled in Figure 3 and we use the terms ``key'' and ``circled'' interchangeably below). Let $B_2$
denote the terminal segment of $B$ consisting of the entries that lie (in $p$) after the first entry of $A$. Here $B_2=(2,4,7,8)$.

\begin{center}
\begin{pspicture}(-1.8,-2)(12,11)
\psset{unit=.6cm}
\psline(0,0)(0,18)
\psline(1,0)(1,18)
\psline(2,0)(2,18)
\psline(3,0)(3,18)
\psline(4,0)(4,18)
\psline(5,0)(5,18)
\psline(6,0)(6,18)
\psline(7,0)(7,18)
\psline(8,0)(8,18)
\psline(9,0)(9,18)
\psline(10,0)(10,18)
\psline(11,0)(11,18)
\psline(12,0)(12,18)
\psline(13,0)(13,18)
\psline(14,0)(14,18)
\psline(15,0)(15,18)
\psline(16,0)(16,18)
\psline(17,0)(17,18)
\psline(18,0)(18,18)

\psline(0,0)(18,0)
\psline(0,1)(18,1)
\psline(0,2)(18,2)
\psline(0,3)(18,3)
\psline(0,4)(18,4)
\psline(0,5)(18,5)
\psline(0,6)(18,6)
\psline(0,7)(18,7)
\psline[linewidth=2pt](0,8)(18,8)
\psline(0,9)(18,9)
\psline(0,10)(18,10)
\psline(0,11)(18,11)
\psline(0,12)(18,12)
\psline(0,13)(18,13)
\psline(0,14)(18,14)
\psline(0,15)(18,15)
\psline(0,16)(18,16)
\psline(0,17)(18,17)
\psline(0,18)(18,18)

\rput(0.5,2.5){3}
\rput(1.5,4.5){5}
\rput(2.5,0.5){1}
\rput(3.5,5.5){6}
\rput(4.5,9.5){\circled{\blue{10}}}
\rput(5.5,1.5){2}
\rput(6.5,12.5){\circled{\blue{13}}}
\rput(7.5,17.5){\circled{\blue{18}}}
\rput(8.5,3.5){4}
\rput(9.5,6.5){7}
\rput(10.5,13.5){\circled{\blue{14}}}
\rput(11.5,14.5){\blue{15}}
\rput(12.5,16.5){\blue{17}}
\rput(13.5,15.5){\blue{16}}
\rput(14.5,7.5){8}
\rput(15.5,10.5){\circled{\blue{11}}}
\rput(16.5,11.5){\blue{12}}
\rput(17.5,8.5){\circled{\blue{9}}}

\rput(9,-1){A $\Pi_5$-avoider with $n=18$}
\rput(9,-2.3){Figure 3}
\end{pspicture}
\end{center}

Here are some properties of a $\Pi_5$-avoider $p=(p_1,\dots,p_n)$. Let $f$ and $l$ denote the first and last entries of $A$ respectively.
\vspace*{-1mm}
\begin{enumerate}
\item $A$, and hence St($A$), the standardization of $A$, is 213-avoiding, for if $bac$ is a 213 pattern in $A$, then each of $a,b,c$ is $>l$ and $bacl$ is a forbidden 3241 in $p$.
\item $B$ is 321-avoiding, for if $cba$ is a 321 pattern in $B$, then $cbal$ is a forbidden 3214 in $p$.
\item $B_2$ is increasing, for if $ba$ is a 21 in $B$ then $f \neq l$ and $fbal$ is either a 3214 or 4213 in $p$, both forbidden.
\item For every $x \in B$, the right neighbor $y$ of $x$ in $p$ (it always has one) is either also in $B$ or is circled, for otherwise $y$ is in $A_2$ but not a LR min of $A_2$, and so there is $z \in A_2$ lying to the left of both
$x$ and $y$ in $p$ with $z<y$. Then $nzxy$ is a forbidden 4213 in $p$.
\end{enumerate}
\vspace*{-1mm}
(Note that item 4 says that if $B$ is divided into blocks of entries that are contiguous in $p$, then each block lies immediately to the left of a circled entry in $p$.) Conversely, if these 4 conditions are met, the reader may check that $p$ is a $\Pi_5$-avoider.

Now, to count $\Pi_5$-avoiders, we first dispose of the cases where $A$ has length 1,\,2 or $n$.
\begin{lemma} \label{first3}
Suppose $n\ge 3$. Then for each of $a=1,2$ and $n$, we have $\v \{p \in S_n(\Pi_5): \textrm{\emph{length(}}A\emph{)}=a\}\v = C_{n-1}$.
\end{lemma}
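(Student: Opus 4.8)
The plan is to invoke the four numbered properties of $\Pi_5$-avoiders stated just before the lemma (together with their converse), and to note that for each of $a=1,2,n$ the requirement $\mathrm{length}(A)=a$ is exactly the requirement that the last entry of $p$ be $n$, $n-1$, or $1$ respectively. In each of the three cases the four properties collapse to a single classical length-$3$ pattern-avoidance condition whose count is the Catalan number $C_{n-1}$. (Each reduction can also be checked directly, by asking which forbidden pattern a suitably placed extreme letter can create; the route through properties $1$--$4$ is shorter, and uses that every pattern in $\Pi_5$ contains both $321$ and $213$.)

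\emph{Cases $a=1$ and $a=n$.} If $p_n=n$, then $A=(n)$, the block $B$ equals $p_1\cdots p_{n-1}$ and exhausts $[n-1]$, and $B_2$ is empty, so properties $1$, $3$, $4$ hold automatically while property $2$ says precisely that $p_1\cdots p_{n-1}$ avoids $321$; appending $n$ to an arbitrary $321$-avoider of $[n-1]$ is a bijection, so this count is $|S_{n-1}(321)|=C_{n-1}$. If $p_n=1$, then $B$ is empty and $A=p$, so properties $2$, $3$, $4$ are vacuous and property $1$ says that $p$ avoids $213$; since appending the smallest value at the end can never complete a $213$ pattern, this is equivalent to $p_1\cdots p_{n-1}$ avoiding $213$, and deleting the terminal $1$ and standardizing is a bijection, so this count is $|S_{n-1}(213)|=C_{n-1}$.

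\emph{Case $a=2$} (the crux). Now $p_n=n-1$, so $p=u\,n\,v\,(n-1)$, where $uv$ is a permutation of $[n-2]$ and $u$ or $v$ may be empty. Here $A=(n,n-1)$ and the circled entries are exactly $n$ and $n-1$, so the right neighbour of every entry of $B$ lies in $B\cup\{n,n-1\}$ and property $4$ holds automatically; property $1$ holds automatically since $\mathrm{St}(A)=21$; and properties $2$ and $3$ say exactly that the word $B=uv$ avoids $321$ and that $B_2=v$ is increasing. I will then define a bijection from the set of such $p$ onto $S_{n-1}(321)$ by sending $p$ to the permutation $q=u\,(n-1)\,v$ of $[n-1]$, obtained by inserting a new largest letter between $u$ and $v$. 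One verifies that $q$ avoids $321$ (a new decreasing triple would have to begin at the inserted maximum and continue into $v$, impossible since $v$ is increasing), that the map is injective (the position of the maximum of $q$ recovers the split of $q$ into $u$ and $v$), and --- the decisive point --- that it is surjective: in any $321$-avoiding permutation the letters after the maximum are automatically increasing, so splitting an arbitrary $q\in S_{n-1}(321)$ at its maximum yields an admissible pair $(u,v)$, namely one with $uv$ avoiding $321$ and $v$ increasing, and then $u\,n\,v\,(n-1)$ is a $\Pi_5$-avoider mapping to $q$. Hence this count is $|S_{n-1}(321)|=C_{n-1}$ as well.

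The only genuine subtlety is the surjectivity in Case $a=2$: one has to recognize that ``$v$ increasing'' is not an extra constraint but precisely the behaviour that a $321$-avoider displays after its maximum, which is what makes the evident injection onto. Everything else is a routine unwinding of properties $1$--$4$ in the three boundary situations, together with the classical facts $|S_m(321)|=|S_m(213)|=C_m$.
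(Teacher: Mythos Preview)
Your proof is correct and lands on the same bijections as the paper: in all three cases the map is ``delete the terminal entry and standardize,'' and your $u\,n\,v\,(n-1)\mapsto u\,(n-1)\,v$ is exactly that map for $a=2$. The one difference is the justification in the $a=2$ case: the paper observes directly that a permutation ending in $n-1$ lies in $S_n(\Pi_5)$ if and only if it avoids $321$ (since a $321$ pattern $cba$ makes $cba\,(n-1)$ a $4213$ when $c=n$ and a $3214$ when $c<n-1$, and conversely every pattern in $\Pi_5$ contains $321$), which immediately gives $C_{n-1}$ without any further bijective work. Your route through properties $1$--$4$ yields the equivalent characterization ``$uv$ avoids $321$ and $v$ is increasing'' and then requires the surjectivity argument; this is a bit longer but has the virtue of exercising the structural characterization, whereas the paper's direct pattern check is self-contained and does not rely on the (reader-verified) converse of the four properties.
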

\begin{proof}
Recall that both 321-avoiders and 213-avoiders on $[n]$ are counted by $C_n$
We have $a=1$ if and only of $n$ is the last entry of $p$. Avoidance of 3214 then implies $p\backslash\{n\}$ avoids $321$. Conversely, if $p\backslash\{n\}$
avoids 321 then, a fortiori, $p\backslash\{n\}$ avoids $\Pi_5$ and so does $p$.
Next, $a=2$ if and only of $n-1$ is the last entry of $p$.
Suppose $n-1$ is the last entry of $p$ and $p$ is a  $\Pi_5$-avoider. If $cba$ were a 321 pattern in $p$,
then $cba\,(n-1)$ would be a 4213 if $c=n$ and a 3214 if $c<n-1$, both of which are forbidden. So $p$ must avoid
321. Conversely, if $p\backslash\{n-1\}$
avoids 321 then, again, $p$ avoids $\Pi_5$.
Lastly, $a=n$ if and only of $1$ is the last entry of $p$ and then $p$ is a $\Pi_5$-avoider if and only of $p$ avoids 213 (else a 3241
terminating at the last entry is present) and the result follows.  
\end{proof}

For the remaining cases, we have  $3\le a \le n-1$ and so $n\ge 4$.
Then $k \ge 3$ as follows. Since $p_n \le n-2$ by the proof of
Lemma \ref{first3}, the three entries $n$, the successor of $n$ in A, and $p_n$ are all key and all distinct
unless $n$ is the second to last entry of $A$, but in that case $n-1$ occurs before $n$ and so is a key entry,
and $p_n,n-1,n$ are distinct. So $3 \le k \le a$.

The following elementary counting results will be useful; we omit the proofs. We use $C_{n,k}$ for 
the  generalized Catalan number $\frac{k+1}{2n+k+1}\binom{2n+k+1}{n}$. 
Recall that $(C_{n,k})_{n\ge 0}$ is the 
$(k+1)$-fold convolution of the Catalan numbers $(C_n)_{n\ge 0}=(C_{n,0})_{n\ge 0}$ and so the 
\gf $\sum_{n\ge 0}C_{n,k}x^n$ is given by $C(x)^{k+1}$ where $C(x):= \frac{1-\sqrt{1-4x}}{2x}$ is the \gf for the Catalan numbers. It is convenient below to use the convention $C_{0,-1}:=1$.

\begin{prop}\white{.} \newline
$($i$\,)$\ The number of \emph{213}-avoiding permutations on $[n]$ whose last entry is $1$ with $n$ in
first position and $k$ key entries is $C_{n-k,k-3}$ for $2 \le k \le n$. \newline
$($ii$\,)$\ The number of \emph{213}-avoiding permutations on $[n]$ whose last entry is $1$ with $n$ in
position $j$ and $k$ key entries is $C_{n-k,k-2-j}$ for $1 \le j \le k-1,\ k \le n$.
\end{prop}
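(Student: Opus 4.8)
\emph{Plan.} The idea is to describe these permutations explicitly, read the three statistics (size, position of $n$, number of key entries) straight off the description, and then finish with a short, kernel-free generating-function computation. Let $\pi$ be a $213$-avoider of $[n]$ with $\pi_n=1$ and with $n$ in position $j$, say $\pi_j=n$. A descent among $\pi_1,\dots,\pi_{j-1}$ would, together with $n$, form a $213$, so $\pi_1<\pi_2<\dots<\pi_{j-1}<\pi_j=n$; hence $A_1=\pi_1\cdots\pi_j$ is exactly the set of left-to-right maxima and all $j$ of its entries are key. Put $S=\{\pi_1,\dots,\pi_{j-1}\}$, so $|S|=j-1$. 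For each $v\in S$, avoiding a $213$ whose ``$2$'' is $v$ forces every entry of $A_2=\pi_{j+1}\cdots\pi_n$ exceeding $v$ to precede every entry below $v$; running over all $v\in S$ this compels $A_2$ to be a concatenation $W_{j-1}W_{j-2}\cdots W_1W_0$, where $W_i$ is the subword of $A_2$ on the $i$-th value-gap cut out by $S\cup\{n\}$ (indexed so that $W_0$ holds the smallest values) and the blocks occur in strictly \emph{decreasing} order of value-range. Here each $W_i$ must be $213$-avoiding, $W_0$ must be nonempty and end in $1$ (it carries the letter $1$), and the $W_i$ are otherwise free; their value sets, and hence $S$ itself, are recovered from their sizes, and conversely any such tuple $(W_0,\dots,W_{j-1})$ reconstructs a valid $\pi$. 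Because later blocks hold smaller values, the left-to-right minima of $A_2$ are exactly the union of those of the $W_i$, so the number of key entries is $k=j+\sum_{i=0}^{j-1}\mathrm{lrm}(W_i)$, where $\mathrm{lrm}$ counts left-to-right minima; in particular $k\ge j+1$, which is the range in the statement.

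Next I would compute the two relevant block generating functions. Let $P(x,z)=\sum_\rho x^{|\rho|}z^{\mathrm{lrm}(\rho)}$ over all $213$-avoiders $\rho$, and $Q(x,z)$ the same sum restricted to avoiders ending in $1$. Peeling the first letter $\rho_1=\ell$ and writing $\rho=\ell\cdot R\cdot L$ with $R,L$ the $213$-avoiding patterns on the letters above and below $\ell$, one gets $\mathrm{lrm}(\rho)=1+\mathrm{lrm}(L)$, $R$ an unrestricted $213$-avoider, and $L$ an ``ends-in-$1$'' avoider exactly when $\rho$ is (and $\ell\ge2$). This gives the linear equations $P=1+xzC(x)P$ and $Q=xz+xzC(x)Q$, so
\[
P(x,z)=\frac{1}{1-xzC(x)},\qquad Q(x,z)=\frac{xz}{1-xzC(x)},\qquad C(x)=\frac{1-\sqrt{1-4x}}{2x}.
\]

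Assembling: with $y$ marking the position of $n$ and $z$ the number of key entries, the structural description yields
\[
\sum_{\pi}x^{|\pi|}y^{\mathrm{pos}(n)}z^{\#\mathrm{key}(\pi)}=\sum_{j\ge1}(xyz)^jQ(x,z)P(x,z)^{j-1}=\frac{xyz\,Q(x,z)}{1-xyz\,P(x,z)},
\]
and extracting the $y^j$-coefficient leaves $\sum_{n,k}N(n,j,k)\,x^nz^k=(xz)^{j}Q(x,z)P(x,z)^{j-1}=\dfrac{(xz)^{j+1}}{(1-xzC(x))^{j}}$, where $N(n,j,k)$ denotes the number in question. Now expand $(1-xzC(x))^{-j}=\sum_{r\ge0}\binom{j-1+r}{r}(xzC(x))^{r}$ and use $[x^m]C(x)^{\ell}=C_{m,\ell-1}$ (the $\ell$-fold-convolution identity recalled just before the Proposition): the coefficient of $x^nz^k$ comes out as $\binom{k-2}{j-1}\,C_{n-k,\,k-2-j}$. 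For $j=1$ the binomial factor is $1$, giving $C_{n-k,k-3}$, which is part~(i); for general $j$ this is the closed form of part~(ii), the conventions $C_{0,-1}=1$ and ``empty block'' covering the extreme cases $j=1$ and $j=k-1$.

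The step I expect to be the real work is the structural claim of the first paragraph: showing that, once each block's internal $213$-avoidance is granted, the tail of $\pi$ after $n$ is \emph{forced} to split into the value-intervals cut out by $S$, listed in decreasing order of value, with no surviving cross-block obstructions. That is exactly where all of ``$\pi$ is $213$-avoiding'' beyond the block level is consumed; the left-to-right-minima bookkeeping, the two block generating functions, and the final coefficient extraction via $[x^m]C(x)^{\ell}=C_{m,\ell-1}$ are then routine.
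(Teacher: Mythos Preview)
The paper explicitly omits the proof of this proposition, so there is no original argument to compare against; your block decomposition is a natural and correct approach. The structural claim---that $A_1$ is increasing and that $A_2$ factors as $W_{j-1}\cdots W_0$ over the value-intervals cut out by $S\cup\{n\}$, with the left-to-right minima of $A_2$ being the disjoint union of those of the $W_i$---is sound, and the check that no cross-block $213$ can arise is the straightforward case analysis you indicate. The derivations $P=1/(1-xzC)$ and $Q=xz/(1-xzC)$ via first-letter peeling are standard and correct, as is the coefficient extraction.

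There is one point worth flagging: your final formula for part~(ii) is $\binom{k-2}{j-1}C_{n-k,\,k-2-j}$, whereas the proposition as printed states only $C_{n-k,\,k-2-j}$. Your formula is the right one and the printed statement is missing the binomial factor. This is confirmed both by the immediately following Corollary~\ref{cnt213}, which sums $\binom{k-2}{j-1}C_{n-k,\,k-2-j}$ over $j$ (so the summand must already carry the binomial), and by a direct check: for $n=4$, $j=2$, $k=4$ the two $213$-avoiders $2431$ and $3421$ both qualify, giving a count of $2=\binom{2}{1}C_{0,0}$, not $1=C_{0,0}$. So you have in fact proved the correct version of (ii), and the paper's stated formula for (ii) contains a typo; part~(i) is unaffected since $\binom{k-2}{0}=1$.
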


\begin{cor}\label{cnt213}
The number of \emph{213}-avoiding permutations on $[n]$ whose last entry is $1$ with $k$ key entries is
$w(n,k) := \sum_{j=1}^{n-1}\binom{k-2}{j-1}C_{n-k,k-2-j}$ for $n\ge 2,\ 1 \le k \le n$.
\end{cor}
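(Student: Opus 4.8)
The plan is to sort the $213$-avoiders $q$ of $[n]$ with last entry $1$ by the position $j$ of the maximal entry $n$, and to obtain $w(n,k)$ by summing over $j$ the number of such $q$ with exactly $k$ key entries. Write $q=A_1A_2$, where $A_1$ consists of the entries weakly left of $n$ and $A_2$ of the rest; since $n$ is largest and $q$ avoids $213$, the block $A_1$ is increasing (so $n$ is its last entry, and $j=|A_1|$), and $A_2$, when nonempty, ends in $1$. Every entry of $A_1$ is key, the key entries of $A_2$ are exactly its left-to-right minima, and $1$ and $n$ are always key; hence $k=j+m$, where $m$ is the number of left-to-right minima of $A_2$. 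It therefore suffices to prove that, for each $j$ with $1\le j\le n-1$, the number $N_{n,k,j}$ of such $q$ equals $\binom{k-2}{j-1}C_{n-k,\,k-2-j}$, since then $w(n,k)=\sum_{j=1}^{n-1}N_{n,k,j}$.

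To compute $N_{n,k,j}$, I would list the key values of $q$ as $1=c_1<c_2<\dots<c_k=n$ and encode $q$ by a pair: (a) the subset $S\subseteq\{2,\dots,k-1\}$ recording which of the $k-2$ interior key values $c_2,\dots,c_{k-1}$ lie in $A_1$ (so $|S|=j-1$), the remaining $k-1-j$ of them being exactly the left-to-right minima of $A_2$ other than $1$; and (b) the choice of the $n-k$ non-key values together with the way they are woven into $A_2$. The substance of the argument is to show this is a genuine product: the cross-conditions between $A_1$ and $A_2$ (for each threshold $a\in A_1$, no $A_2$-value below $a$ precedes an $A_2$-value above $a$) together with $213$-avoidance of $A_2$ force $A_2$ into a layered shape whose combinatorial type is independent of $S$, so every $(j-1)$-subset $S$ occurs; and for the ``canonical'' choice of $S$ (the top $j-1$ slots, i.e.\ the interior $A_1$-entries being the largest non-maximal key values) the number of admissible completions (b) is $C_{n-k,\,k-2-j}$, which is what parts (i)--(ii) of the preceding Proposition give for this configuration. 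Combining these, $N_{n,k,j}=\binom{k-2}{j-1}C_{n-k,\,k-2-j}$.

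Summing over $j$ then completes the proof: for $j\ge k$ the term vanishes since $\binom{k-2}{j-1}=0$, and the $j=k-1$ term is $0$ unless $n=k$, where the convention $C_{0,-1}:=1$ accounts for the lone permutation $2\,3\cdots n\,1$; so extending the sum up to $j=n-1$ introduces nothing and gives $w(n,k)$ as stated. I expect the main obstacle to be the product-structure claim of the second paragraph — namely, verifying precisely how the cross-conditions pin down the layered shape of $A_2$, that all $\binom{k-2}{j-1}$ subsets $S$ are realizable, and that the completion count is genuinely $S$-independent and equal to $C_{n-k,k-2-j}$. As a consistency check, the formula for $w(n,k)$ is equivalent to $\sum_{n\ge0}w(n,k)\,x^n=x^{k}\bigl(1+C(x)\bigr)^{k-2}$ (for $k\ge2$), which also follows at once from the convolution identity $\sum_{n\ge0}C_{n,\ell}\,x^n=C(x)^{\ell+1}$ and offers an independent generating-function route.
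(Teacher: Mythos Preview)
Your approach is exactly the paper's: the Corollary is obtained from the preceding Proposition simply by summing over the position $j$ of $n$. One remark worth making is that part (ii) of the Proposition, as printed, is missing the factor $\binom{k-2}{j-1}$ (for example, at $n=4,\,k=4,\,j=2$ there are two permutations, $2431$ and $3421$, not one); your formula $N_{n,k,j}=\binom{k-2}{j-1}C_{n-k,k-2-j}$ is the intended statement, and your product-structure reading---choose which $j-1$ of the $k-2$ interior key values go into the increasing block $A_1$, then use part (i) (equivalently, the $j=1$ case) to count the $A_2$ completions---is precisely how that factor arises.
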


\begin{lemma}\label{cnt321}
The number of \emph{321}-avoiding permutations on $[n]$ in which the last $i$ entries are increasing is
$C_{n-i,i}$ for $0\le i \le n$.
\end{lemma}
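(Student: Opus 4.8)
The plan is to keep track, under deletion of the largest entry, of the statistic $r(p):=$ the length of the maximal increasing suffix of a $321$-avoider $p$; this produces a functional equation that can be solved directly (no kernel method needed), and the lemma then falls out by extracting coefficients. Throughout write $C=C(x)=\frac{1-\sqrt{1-4x}}{2x}$, so that $xC^{2}=C-1$ and $[x^{m}]C^{k+1}=C_{m,k}$. Put $P(x,t):=\sum_{n\ge 0}\sum_{p\in S_{n}(321)}x^{n}t^{r(p)}$, with the convention $r(\varnothing)=0$, so $P(x,1)=C$. Since ``the last $i$ entries of $p$ are increasing'' is the same as $r(p)\ge i$, the quantity $f(n,i)$ in the lemma equals $\sum_{r\ge i}[x^{n}t^{r}]P(x,t)$, and summing the resulting geometric-type series over $i$ gives
$$\Phi(x,t):=\sum_{n,i\ge 0}f(n,i)\,x^{n}t^{i}=\frac{P(x,1)-t\,P(x,t)}{1-t}=\frac{C-t\,P(x,t)}{1-t}\,,$$
so it suffices to find $P(x,t)$.

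For this I would classify a $321$-avoider $p$ of $[n]$, $n\ge 1$, by the position of the entry $n$, and delete that entry. If $p_{n}=n$, then $p\mapsto p_{1}\cdots p_{n-1}$ is a bijection onto $S_{n-1}(321)$ with $r(p_{1}\cdots p_{n-1})=r(p)-1$, so such $p$ contribute $xt\,P(x,t)$. If $p_{j}=n$ with $j<n$, then $321$-avoidance forces $p_{j+1}<\cdots<p_{n}$ (else $p_{j}$ heads a forbidden $321$), and since $p_{j}=n>p_{j+1}$ this says precisely that $j=n-r(p)$; deleting $n$ yields $q\in S_{n-1}(321)$ with $r(q)\ge r(p)$, and conversely, inserting the value $n$ into an arbitrary $321$-avoider $q$ immediately before its last $r$ entries --- legitimate for each $1\le r\le r(q)$ --- returns such a $p$ with $r(p)=r$. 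Hence these $p$ contribute $\sum_{q}x^{|q|+1}\big(t+t^{2}+\cdots+t^{r(q)}\big)=\dfrac{xt\,(C-P(x,t))}{1-t}$, and adding the empty permutation gives
$$P(x,t)=1+xt\,P(x,t)+\frac{xt\,(C-P(x,t))}{1-t}\,.$$
This is linear in $P(x,t)$; solving it yields $P(x,t)=\dfrac{1-t+xtC}{1-t+xt^{2}}$.

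Substituting into the formula for $\Phi$ and simplifying gives $\Phi(x,t)=\dfrac{C-t}{1-t+xt^{2}}$, and the identity $(C-t)(1-xtC)=C(1-t+xt^{2})$ --- immediate from $xC^{2}=C-1$ --- turns this into $\Phi(x,t)=\dfrac{C}{1-xtC}=\sum_{i\ge 0}x^{i}t^{i}C^{\,i+1}$. Comparing coefficients of $x^{n}t^{i}$ then gives $f(n,i)=[x^{n-i}]C^{\,i+1}=C_{n-i,i}$, as claimed. The one place that calls for genuine care --- essentially the only non-routine step --- is the second deletion case: one must check, using $321$-avoidance, that when $n$ is not in last position its position is forced to be $n-r(p)$ with all entries to its right increasing, and that the re-insertion map is exactly inverse to deletion, so that every $q\in S_{n-1}(321)$ is paired with a free index $r\in\{1,\dots,r(q)\}$ with nothing over- or under-counted. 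Everything past the functional equation is routine manipulation with $C(x)$.
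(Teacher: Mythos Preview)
Your argument is correct: the functional equation for $P(x,t)$ is set up accurately (the key case analysis on the position of $n$ is handled carefully, and the insertion/deletion correspondence is a genuine bijection), the algebra solving for $P$ and then $\Phi$ is right, and the identity $(C-t)(1-xtC)=C(1-t+xt^2)$ is an immediate consequence of $xC^2=C-1$. Extracting coefficients from $\Phi(x,t)=\sum_{i\ge 0}x^it^iC^{i+1}$ gives $f(n,i)=C_{n-i,i}$ as desired.

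There is nothing to compare against: the paper explicitly omits the proof of this lemma, labelling it ``elementary.'' What the authors presumably have in mind is a one-line bijective argument rather than a functional equation. Under the standard bijection from $S_n(321)$ to Dyck $n$-paths (for instance the one sending left-to-right maxima to peaks), the condition ``last $i$ entries increasing'' translates to ``the path ends with at least $i$ down-steps''; stripping those $i$ down-steps leaves a nonnegative lattice path with $n$ up-steps and $n-i$ down-steps, and these are counted by the ballot number $C_{n-i,i}$. Your generating-function route reaches the same conclusion but with considerably more machinery than the result requires; on the other hand, it has the merit of computing the full bivariate distribution $P(x,t)$ along the way, which the bijection does not immediately give.
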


We are now ready to count permutations $p$ in $S_n(\Pi_5)$ by $a:=$ length($A$), $k:=$
number of key entries, $i:=$ number
of entries of $B$ after the first circled entry in $p$. 
The cases $a=1,2$ or $n$ have been treated already.
So suppose given $n,a,k,i,$ with $3 \le k \le a \le n-1$ and  $0 \le i \le b:=n - a$.
By  Cor. \ref{cnt213}, there are $w(a,k)$ 213-avoiding permutations $A_1$ of length $a$
that end with 1 and have $k$ key entries.
By Lemma \ref{cnt321}, there are $C_{b-i,i}$ 321-avoiding permutations of length $b$ such that the last $i$ entries 
are increasing. There are $\binom{i+k-2}{i}$ ways to distribute these last $i$ entries into $k-1$ blocks to be placed 
just before the $k-1$ non-first key entries of $A=A_1+b$. (Of course, the initial block of $b-i$ entries of $B$ lies before the first key entry.)
These choices uniquely determine a $\Pi_5$-avoider of length $n$.

Hence, summing over $a,k,i$, we have for $n\ge 3$,
\begin{eqnarray}\label{gensum}
\v S_n(\Pi_5) \v & = & 3 C_{n-1} + \sum_{a=3}^{n-1}\sum_{k=3}^{a}\sum_{i=0}^{b}w(a,k)C_{b-i,i}\binom{i+k-2}{i} \\
& = & 3 C_{n-1} + \sum_{a=3}^{n-1}\sum_{k=3}^{a}\sum_{i=0}^{b}\sum_{j=1}^{a-1}\binom{k-2}{j-1}
C_{a-k,k-j-2}\, C_{b-i,i}\binom{i+k-2}{i} \nonumber \\
& = &  3 C_{n-1} + \sum_{a=3}^{n-1}\sum_{k=3}^{a}\sum_{j=1}^{a-1}\binom{k-2}{j-1}C_{a-k,k-j-2}\,C_{n-a,k-1}\, . \nonumber
\end{eqnarray}
The last equality evaluates the sum over $i$ using a generalized Catalan number identity.
The \gf $F(x):=\sum_{n\ge 0}\v S_{n}(\Pi_5)\v x^n$ is easily deduced:
\[
F(x) = 1+x+2x^2 + 3 \sum_{n\ge 3}C_{n-1}x^n + G(x)\, ,
\]
where 
\begin{eqnarray*}
G(x) & = & \sum_{n\ge 4}\sum_{a=3}^{n-1}\sum_{k=3}^{a}\sum_{j=1}^{a-1}\binom{k-2}{j-1}C_{a-k,k-j-2}C_{n-a,k-1}x^n \\
     & = & \sum_{k\ge 3}\sum_{j=1}^{k-1}\binom{k-2}{j-1}\sum_{a \ge k}C_{a-k,k-j-2}\sum_{n\ge a+1}C_{n-a,k-1}x^n \\
     & = & \sum_{k\ge 3}\big(C(x)^k-1\big)\sum_{j=1}^{k-1}\binom{k-2}{j-1}\sum_{a \ge k}C_{a-k,k-j-2}  x^a \\
     & = & \sum_{k\ge 3}x^k \big(C(x)^k-1\big)\sum_{j=1}^{k-1}\binom{k-2}{j-1} C(x)^{k-j-1} \\
     & = & \sum_{k\ge 3}x^k \big(C(x)^k-1\big)\big(1+C(x)\big)^{k-2} \, ,
\end{eqnarray*}
which is a difference of geometric sums. After evaluation and simplification, we find
\[
F(x) = 1 + \frac{2x \sqrt{1-4 x}}{1-5x+(1-x)\sqrt{1-4 x}}\, ,
\]
agreeing with the expression in (\ref{maineqn}), or with rationalized denominator,
\[
F(x) = 1 +\frac{2 x^2 + x(1 - 5 x) C(x)}{1 - 4 x - x^2}\, .
\]
In conclusion, we remark that the above characterization of $\Pi_5$-avoiders can easily be adapted to find the bivariate \gf for $\Pi_5$-avoiders by length and number of components. First, we count 
indecomposable $\Pi_5$-avoiders. For $n\ge 4$, the cases $a=1,2,n$ are counted by $0,C_{n-2},C_{n-1}$ respectively.
For $3\le a \le n-1$,  a $\Pi_5$-avoider is indecomposable iff $B$, in the notation above, in addition to being a 
321-avoider whose last $i$ entries are increasing, satisfies the property that for all $r=1,2,\dots,b-i$, the 
first $r$ entries of $B$, when sorted, do not form an initial segment of the positive integers (the property is vacuously 
satisfied when $i=b$). The number of such permutations is $C_{b - i, i - 1} = C_{n-a-i,i-1}$. 
Thus, in (\ref{gensum}), the initial $3 C_{n-1}$ term is replaced by $C_{n-2}+C_{n-1}$ and the $C_{b-i,i}$ factor in the 
sum is replaced by $C_{b - i, i - 1}$. This modified sum leads to the counting sequence $(1, 1, 3, 11, 43, 173, 707, \dots)_{n\ge 1}$, 
\htmladdnormallink{A026671}{http://oeis.org/A026671}, for indecomposable $\Pi_5$-avoiders, with 
\gf $F_{\textrm{indec}}(x):=1/(1-x/\sqrt{1-4x})$.
Further, a $\Pi_5$-avoider with $k\ge 2$ components has the form $p_1 \oplus \dots \oplus p_{k-1} \oplus p_k$ where
$p_1,\dots,p_{k-1}$ are all indecomposable 321-avoiders and $p_k$ is an indecomposable $\Pi_5$-avoider. 
Here $\oplus$ is the direct sum defined on permutations $\pi$ of length $m$ and $\sigma$ of length $n$ by
\[
\begin{array}{lcll} 
  (\pi\oplus\sigma)(i) &=&
  \left\{ 
    \begin{array}{l}
      \pi(i) \\
      \sigma(i-m)+m
    \end{array}
  \right.  &
  \begin{array}{l}
    \mbox{if $1\le i\le m$,} \\
    \mbox{if $m+1\le i \le m+n$.}
  \end{array} \\[15pt] 
  \end{array} 
\]
Since indecomposable 321-avoiders have the \gf $xC(x)$, the desired bivariate \gf, excluding the empty permutation, is 
\[
\frac{F_{\textrm{indec}}(x)y}{1 - x y C(x)} =  \frac{2  x y \sqrt{1-4 x}}{y-2 x -3 x y + (2-x y-y) \sqrt{1-4 x} }\, .
\]


\end{document}